\documentclass[12pt]{article}
\usepackage{amsmath,amssymb,CJK}\usepackage{mathrsfs}
\usepackage{amsfonts}
\usepackage{amsmath,amssymb}

\usepackage{amsthm}
\usepackage{amstext}
\usepackage{amssymb}
\usepackage{mathrsfs}
\usepackage{amscd}
\usepackage{xypic}
\usepackage{epsf}              
\usepackage{graphicx}          
\usepackage{fancybox}          
\usepackage{color}             
\usepackage{fancyhdr}
\usepackage[hang,footnotesize]{caption2}

\openup 5pt \pagestyle{plain} \oddsidemargin -10pt \evensidemargin
-10pt \topmargin -45pt \textwidth 6.45truein \textheight 9.3truein
\parskip .055 truein
\baselineskip 5.5pt \lineskip 5.5pt \numberwithin{equation}{section}

\def\qed{\hfill$\Box$\par}

\def \R{\hbox{$I\hskip -3pt R$}}

\def\qed{\ \ \ifhmode\unskip\nobreak\fi\ifmmode\ifinner
         \else\hskip5pt\fi\fi
 \hbox{\hskip5pt\vrule width4pt height6pt depth1.5pt\hskip 1 pt}}

\def\cl{\centerline}

\def\vs{\vspace*}

\def\R{\mathbb{R}}

\def\K{\mathbb{K}}

\setlength{\arraycolsep}{0.5mm}

\def\mathcal{\mathscr}
\newfont{\aaa}{cmb10 at 19pt}
\newfont{\bbb}{cmb10 at 11pt}

\newtheorem{theo}{Theorem}[section]
\newtheorem{lemm}[theo]{Lemma}
\newtheorem{rema}[theo]{Remark}
\newtheorem{defi}[theo]{Definition}

\newtheorem{prop}[theo]{Proposition}
\newtheorem{exam}[theo]{Example}
\pagestyle{myheadings}

\begin{document}

\cl {{\Large\bf Representations of Bihom-Lie algebras}
\noindent\footnote{Supported by the National Science Foundation of
China (No. 11047030) and the Science and Technology
Program of Henan Province (No. 152300410061).}} \vs{6pt}

\cl{ Yongsheng Cheng$^{1), 2)}$, Huange Qi$^{1)}$}

\cl{$1)$ \small School
of Mathematics and Statistics, Henan
University, Kaifeng 475004, China}
\cl{ $2)$ \small Institute of Contemporary Mathematics, Henan
University, Kaifeng 475004, China}

\vs{15pt}
\normalsize\noindent{\bbb Abstract}\quad
Bihom-Lie algebra is a generalized Hom-Lie algebra endowed with two
 commuting multiplicative linear maps.
 In this paper, we study cohomology and representations of Bihom-Lie
 algebras. In particular, derivations,
central extensions, derivation extensions, the trivial representation
and the adjoint representation of Bihom-Lie algebras
are studied in detail.\vspace{0.3cm}

\noindent{\bbb Keywords}\quad
Bihom-Lie algebras; Derivations; Cohomology; Representations of Bihom-Lie algebras
\\[4mm]

\section{Introduction}

The motivations to study Hom-Lie structures are related to physics
and to deformations of Lie algebras, in particular Lie algebras of
vector fields. Hom-Lie algebra, introduced by Hartig, Larson and Silvestrov in \cite{HLS},
is a triple $(L, [\cdot, \cdot],\alpha_{L}) $
 consisting of a vector space $L$, a bilinear map
 $[\cdot, \cdot]: L\otimes L\rightarrow L$  and a
 vector space homomorphism $\alpha_{L}: L\rightarrow L $
 satisfying the following conditions
$$[x,y]=-[y,x], ~~~(skew-symmetry),$$
$$[\alpha(x),[y,z]]+[\alpha(y),[z,x]]+[\alpha(z),[x,y]]=0,~~~(Hom-Jacobi~~ identity)$$
for all  $x,y,z\in L.$
The main feature of these algebras
is that the identities defining the structures are twisted by homomorphisms.
The paradigmatic examples are $q$-deformations of
Witt and Virasoro algebras, Heisenberg-Virasoro algebra and other algebraic structure
constructed in pioneering works
\cite{ CS2, CY, HLS, LCM, MS1, SX, Y1}.
The representation theory of Hom-Lie algebras
was introduced by Sheng in \cite{S}, in which, Hom-cochain complexes, derivation,
central extension, derivation extension, trivial representation and adjoint representation
of Hom-Lie algebras were studied. In addition,
(co)homology and deformations theory of Hom-algebras were studied in
\cite{AEM, CS1, HSS, MS2, SX, Y2}.

In \cite{GMMP}, the authors introduced a generalized algebraic structure endowed with two
commuting multiplicative linear maps, called Bihom-algebras. When the two linear maps are same, then
Bihom-algebras will be return to Hom-algebras.
These algebraic structures include Bihom-associative algebras, Bihom-Lie algebras and
Bihom-bialgebras. The purpose of this paper is to study cohomology,
representations of Bihom-Lie algebras. In particular, the trivial representation
and the adjoint representation of Bihom-Lie algebras
are studied in detail. Derivations, central extensions and derivation extensions of Bihom-Lie
algebras are also studied as an application.

The paper is organized as follows. In Section 2 we give
the definition, properties of Bihom-Lie algebras and a few examples of Bihom-Lie algebra.
In Section 3 we study derivations, inner derivation and
derivation extension of multiplicative Bihom-Lie algebras.
In Section 4 we give representations of regular Bihom-Lie
algebras, and give their cohomology.
 In Section 5 we study the trivial representations of regular
 Bihom-Lie algebras, and show that central extensions of a
 Bihom-Lie algebra are controlled by the second cohomology
 in the trivial representation. In Section 6 we study the adjoint representation of a regular
Bihom-Lie algebra. We obtain that a $1$-cocycle
associated to the adjoint representation is a derivation for Bihom-Lie algebras.

\section{Bihom-Lie algebras}

Our aim in this section is to introduce more general
definition of Bihom-Lie algebras and properties  from \cite{GMMP}.
From now on, we will always work over  a base field $\K$.
All algebras, linear spaces etc. will be over $\K$.

\begin{defi}
\end{defi}
\flushleft
\begin{enumerate}
\item A \textit{BiHom-associative algebra} over $\Bbbk $
is a 4-tuple $( A,\mu ,\alpha ,\beta )$, where $A$ is
a $\Bbbk $-linear space, $\alpha \colon A\rightarrow A$, $\beta \colon A\rightarrow A$
and $\mu \colon A\otimes A\rightarrow A$ are linear maps, with notation $\mu
 ( a\otimes a^{\prime } ) =aa^{\prime }$, satisfying the following
conditions, for all $a,a^{\prime },a^{\prime \prime }\in A$:
\begin{gather*}
\alpha \circ \beta =\beta \circ \alpha , \\
\alpha (a)  ( a^{\prime }a^{\prime \prime } ) = (
aa^{\prime } ) \beta  ( a^{\prime \prime } )  \quad \text{(BiHom-associativity)}.  
\end{gather*}
~~~~~~In particular, if $\alpha  ( aa^{\prime } ) =\alpha (a) \alpha  (
a^{\prime } )$ and $\beta  ( aa^{\prime } ) =\beta
(a) \beta  ( a^{\prime } )$, we call it multiplicative Bihom-associative algebra.
\item  A  \textit{Bihom-Lie algebra} over a field $\K $ is a
$4$-tuple $( L,[-] ,\alpha ,\beta ) $, where $L$ is a $\K $-linear
space,  $\alpha \colon L\rightarrow L$, $\beta \colon L\rightarrow L$ and $[-]
\colon L\otimes L\rightarrow L$ are linear maps, with notation $[-]
( a\otimes a^{\prime }) =[ a,a^{\prime }] $,
satisfying the following conditions, for all $a,a^{\prime },a^{\prime \prime
}\in L$:
\begin{gather}
\label{a}\alpha \circ \beta =\beta \circ \alpha , \\
\label{b} [ \beta (a) ,\alpha  ( a^{\prime } )  ] =-[ \beta ( a^{\prime }) ,\alpha (a) ]
\qquad \text{(skew-symmetry)}, \\
\label{c} \big[ \beta ^{2}(a) , [ \beta  ( a^{\prime } )
,\alpha  ( a^{\prime \prime } )  ] \big] +\big[ \beta
^{2} ( a^{\prime } ) , [ \beta  ( a^{\prime \prime } )
,\alpha (a)  ] \big] +\big[ \beta ^{2} ( a^{\prime
\prime } ) , [ \beta (a) ,\alpha  ( a^{\prime
} ) ] \big] =0
\end{gather}
\ \ \ \ \ \ \ \ \ \ \ \ \ \ \ \ \
\ \ \ \ \ \ \ \ \ \ \ \ \ \ \ \ \
\text{(Bihom-Jacobi condition)}.
\item A  Bihom-Lie algebra is called a \textit{multiplicative} Bihom-Lie algebra if $\alpha$ and $\beta$ are algebraic morphisms, i.e. for any
 $ a^{\prime }, a^{\prime \prime }\in L$, we have
\begin{gather}
\alpha ([ a^{\prime },a^{\prime \prime }])=[ \alpha (
a^{\prime }) ,\alpha ( a^{\prime \prime }) ],\quad
  \beta ([ a^{\prime },a^{\prime \prime }])=[
\beta ( a^{\prime }) ,\beta ( a^{\prime \prime }) ].
\end{gather}
\item  A multiplicative Bihom-Lie algebra is called a \textit{regular} Bihom-Lie algebra if $\alpha, \beta$
are bijective maps.
\end{enumerate}


\begin{rema}
\label{rema1}
Obviously, a BiHom-Lie algebra $( L,[-] ,\alpha, \beta ) $ for which
$\alpha=\beta$ is just a Hom-Lie algebra $( L, [-]
,\alpha) $.
\end{rema}

~~~Similar with Lie algebras coming from associative algebras by commutator,
Bihom-Lie algebras can also be obtained by Bihom-associative algebras. Unlike the Hom case,
to obtain a Bihom-Lie algebra from a Bihom-associative algebra we need the maps $\alpha$
and $\beta$ to be bijective. Next proposition comes from \cite{GMMP}.

\begin{prop}
\label{croset} If $ ( A,\mu ,\alpha ,\beta
 ) $ is a Bihom-associative algebra with bijective $\alpha $ and $\beta
$, then, for every $a,a^{\prime }\in A$, we can set
\begin{gather*}
[ a,a^{\prime }] =aa^{\prime }-\big(\alpha ^{-1}\beta  (
a^{\prime } )\big)\big(\alpha \beta ^{-1}(a)\big).
\end{gather*}
Then $ ( A,[-] ,\alpha ,\beta  ) $ is a Bihom-Lie
algebra, denoted by~$L(A)$.
\end{prop}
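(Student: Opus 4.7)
The plan is to verify the three axioms (\ref{a}), (\ref{b}), (\ref{c}) of a Bihom-Lie algebra for the 4-tuple $(A,[-],\alpha,\beta)$. Axiom (\ref{a}), $\alpha\circ\beta=\beta\circ\alpha$, is inherited verbatim from the Bihom-associative structure, so nothing needs to be done for it.

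For the skew-symmetry (\ref{b}), I would substitute $\beta(a)$ and $\alpha(a')$ directly into the defining formula of the bracket. The crucial simplifications are $\alpha^{-1}\beta\circ\alpha=\beta$ and $\alpha\beta^{-1}\circ\beta=\alpha$, both immediate consequences of $\alpha\beta=\beta\alpha$ together with bijectivity. They reduce $[\beta(a),\alpha(a')]$ to $\beta(a)\alpha(a')-\beta(a')\alpha(a)$, after which swapping $a\leftrightarrow a'$ produces the opposite sign, as required.

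The substance of the proof lies in the Bihom-Jacobi identity (\ref{c}). My plan is as follows. First, the skew-symmetry computation above already shows $[\beta(y),\alpha(z)]=\beta(y)\alpha(z)-\beta(z)\alpha(y)$ for all $y,z\in A$. Substituting this into each of the three cyclic terms $[\beta^2(x),[\beta(y),\alpha(z)]]$ and expanding the outer bracket by its definition yields four triple products per cyclic term, hence twelve triple products in the full cyclic sum. To bring Bihom-associativity $\alpha(u)(vw)=(uv)\beta(w)$ into play, each such triple product must be rewritten so that either its left factor is in the form $\alpha(\cdot)$ or its right factor in the form $\beta(\cdot)$; this is arranged by inserting $\alpha\circ\alpha^{-1}$ or $\beta\circ\beta^{-1}$ at the appropriate spot and then using $\alpha\beta=\beta\alpha$ to shuffle the twists through. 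Once every triple product is in this normal form, Bihom-associativity reshapes them, and the twelve summands organize into six pairs that cancel across the cyclic sum.

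The main obstacle is the bookkeeping of $\alpha,\beta,\alpha^{-1},\beta^{-1}$ through the nested brackets: one must correctly identify which of the twelve triple products, after normalization via Bihom-associativity, matches which other term produced from a different cyclic permutation of $(a,a',a'')$. The bijectivity hypothesis on $\alpha$ and $\beta$ is used throughout, both to make sense of the twist operators $\alpha^{-1}\beta$ and $\alpha\beta^{-1}$ entering the definition of $[-,-]$ and to carry out the normalization step in the Jacobi verification.
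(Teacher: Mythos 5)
The paper itself offers no proof of this proposition---it is quoted from [GMMP]---so there is no internal argument to compare against; your direct verification of axioms (\ref{a})--(\ref{c}) is the standard route and is, in outline, the right one. Your skew-symmetry computation is correct and needs only $\alpha\beta=\beta\alpha$ and bijectivity, and your accounting for the Bihom-Jacobi identity is accurate: each cyclic term $[\beta^{2}(a),[\beta(a'),\alpha(a'')]]$ expands into four triple products of the common shape $\big(\alpha^{-1}\beta^{2}(x)\beta(y)\big)\beta\alpha(z)$ (two of them via Bihom-associativity applied to $\beta^{2}(a)\cdot w=\alpha(\alpha^{-1}\beta^{2}(a))\cdot w$), and the twelve resulting products do cancel in six pairs across the cyclic sum.

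There is, however, one concrete gap: the step where you ``shuffle the twists through'' the products silently uses that $\alpha$ and $\beta$ (hence $\alpha^{-1}\beta$) are algebra morphisms. The second half of the outer bracket is $-\big(\alpha^{-1}\beta(w)\big)\big(\alpha\beta(a)\big)$ with $w=\beta(a')\alpha(a'')-\beta(a'')\alpha(a')$, and to turn $\alpha^{-1}\beta(w)$ into a sum of products such as $\alpha^{-1}\beta^{2}(a')\cdot\beta(a'')$ you must distribute $\alpha^{-1}\beta$ over the multiplication; commutativity of $\alpha$ and $\beta$ alone gives you nothing here, since $\alpha^{-1}\beta$ is just a linear map applied to a single element of $A$. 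Under Definition~2.1(1) of this paper, multiplicativity is \emph{not} part of the definition of a Bihom-associative algebra (it is the extra adjective ``multiplicative''), so as stated your argument does not go through; you should either add the multiplicativity hypothesis explicitly or note that in the source [GMMP], from which the proposition is taken, multiplicativity of $\alpha$ and $\beta$ is built into the definition, which is what makes the computation legitimate.
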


\begin{exam}
We take an  example of 2-dimensional Bihom-Lie algebra obtained
from $2$-dimensional Bihom-associative algebra . Let $\{e_{1}, e_{2}\}$
be a basis of associative algebra $A$. The linear maps
$\alpha, \beta  \colon A\rightarrow A$ and $\mu
\colon A\otimes A\rightarrow A$ are  defined by
$$\alpha(e_{1})=e_{1},\qquad \alpha(e_{2})=\frac{1}{m}e_{1}
+\frac{n-1}{n}e_{2},\qquad\beta(e_{1})=e_{1}, \qquad \beta(e_{2})=e_{2},$$
$$\mu(e_{1},e_{1})=me_{1}, \qquad \mu(e_{1},e_{2})=me_{2},$$
$$\mu(e_{2},e_{1})=ne_{1}, \qquad \mu(e_{2},e_{2})=ne_{2},$$
where $m, n$ are parameters in $\K$, with $m, n\neq 0$ and $n\neq 1$.
Then we get $(A, \mu, \alpha,\beta)$ is a $2$-dimensional
Bihom-associative algebra. In view of Proposition~\ref{croset}, we have
$$[e_{1}, e_{1}]=0,\qquad [e_{1}, e_{2}]=me_{2}-ne_{1},$$
$$ [e_{2}, e_{1}]=(n-1)e_{1}-\frac{m(n-1)}{n}e_{2},
\qquad [e_{2},e_{2}]=-\frac{n}{m}e_{1}+e_{2}.$$
Then $ ( A,[-] ,\alpha ,\beta  ) $ is a Bihom-Lie
algebra.
\end{exam}

~~~Bihom-Lie algebras can also be induced by Lie
algebras and their homomorphisms \cite{GMMP}.
\begin{prop}
\label{ppp}
Let $ ( L,[-]  ) $ be an ordinary Lie algebra over a
field $\K$ and let $\alpha ,\beta \colon L\rightarrow L$ two commuting linear
maps such that $\alpha  (  [ a,a^{\prime } ]  ) = [
\alpha (a) ,\alpha  ( a^{\prime } )  ]$ and $\beta
 (  [ a,a^{\prime } ]  ) = [ \beta (a)
,\beta  ( a^{\prime } )  ]$, for all $a,a^{\prime }\in L$.
Define the linear map $ \{ - \} \colon L\otimes L\rightarrow L$,
\begin{gather*}
 \{ a,b \} = [ \alpha (a) ,\beta (b) ] ,\qquad \text{for all} \ \ a,b\in L.
\end{gather*}
Then $L_{( \alpha ,\beta ) }:=(L, \{-\}, \alpha ,
\beta )$ is a Bihom-Lie algebra, called the \textit{Yau twist} of $( L,[-]) $.
\end{prop}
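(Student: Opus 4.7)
The plan is to verify the three axioms of a Bihom-Lie algebra for $L_{(\alpha,\beta)}$ directly from the definition of $\{-\}$. The commutativity $\alpha\circ\beta=\beta\circ\alpha$ required in (\ref{a}) is part of the hypotheses, so only the skew-symmetry (\ref{b}) and the Bihom-Jacobi condition (\ref{c}) actually need to be checked.

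For (\ref{b}), I would unfold
$$\{\beta(a),\alpha(a')\}=[\alpha\beta(a),\beta\alpha(a')]=[\alpha\beta(a),\alpha\beta(a')]$$
using $\alpha\beta=\beta\alpha$. Applying skew-symmetry of the ordinary Lie bracket and running the substitution in reverse then yields $\{\beta(a),\alpha(a')\}=-[\alpha\beta(a'),\alpha\beta(a)]=-\{\beta(a'),\alpha(a)\}$.

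For (\ref{c}), the strategy is to push all occurrences of $\alpha$ and $\beta$ out of the inner bracket, so that the whole left-hand side becomes the Jacobi sum of the original Lie algebra on twisted inputs. A typical cyclic term expands as
$$\{\beta^2(a),\{\beta(a'),\alpha(a'')\}\}=[\alpha\beta^2(a),\beta[\alpha\beta(a'),\alpha\beta(a'')]],$$
and then using that $\beta$ is a Lie algebra morphism together with $\alpha\beta=\beta\alpha$, this simplifies to $[\alpha\beta^2(a),[\alpha\beta^2(a'),\alpha\beta^2(a'')]]$. Doing the analogous rewriting on the two remaining cyclic terms shows that the entire left-hand side of (\ref{c}) equals the ordinary Jacobi sum of $L$ evaluated on $\alpha\beta^2(a)$, $\alpha\beta^2(a')$, $\alpha\beta^2(a'')$, and hence vanishes.

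The only real obstacle is bookkeeping: each rewriting step must use the morphism hypothesis on $\alpha$ or $\beta$, or the commutation $\alpha\beta=\beta\alpha$, in exactly the right spot so that the inner $\beta$ (coming from the outer $\{-\}$) is converted into a twist acting on each entry and the outer $\alpha$ (from the outer $\{-\}$) combines cleanly with the pre-existing $\beta^{2}$. No conceptual difficulty arises; the verification is essentially formal once the twists are tracked carefully.
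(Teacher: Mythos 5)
Your verification is correct: both the skew-symmetry and the Bihom-Jacobi condition reduce, after pushing the twists inside via the morphism hypotheses and $\alpha\beta=\beta\alpha$, to the corresponding identities of the ordinary Lie bracket evaluated on $\alpha\beta(\cdot)$ and $\alpha\beta^{2}(\cdot)$ respectively. The paper itself gives no proof (it imports the proposition from [GMMP]), and your direct computation is exactly the standard argument one would expect there.
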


\begin{exam}
\label{exam1}
Let $L=sl(2,\K)$, $char\K\neq 2,$ whose standard basis consists of
$$X=\left(
      \begin{array}{cc}
        0 & 1 \\
        0 & 0 \\
      \end{array}
    \right),
Y=\left(
    \begin{array}{cc}
      0 & 0 \\
      1 & 0 \\
    \end{array}
  \right),
  H=\left(
      \begin{array}{cc}
        1 & 0 \\
        0 & -1 \\
      \end{array}
    \right).
$$
Then $[H,X]=2X,\quad [H,Y]=-2Y,\quad [X,Y]=H.$  Two linear maps 
$\alpha, \beta: L\rightarrow L$ are defined by
$$\alpha(X)=X, \quad \alpha(Y)=-k^{2}X+Y+kH,\quad \alpha(H)=-2kX+H,$$
$$\beta(X)=X, \quad \beta(Y)=-l^{2}X+Y+lH,\quad \beta(H)=-2lX+H,$$
where $k,l$ are parameters in $\K$.
Obviously we have $\alpha\circ\beta=\beta\circ\alpha$. 
Then for $X, Y, H\in L,$ we have
$$\alpha[X,Y]=\alpha(H)=-2kX+H,$$
$$\alpha[H,X]=\alpha(2X)=2X,$$
$$\alpha[H,Y]=\alpha(-2Y)=2k^{2}X-2Y-2kH.$$
On the other hand, we have
 $$[\alpha(X), \alpha(Y)]=[X, -k^{2}X+Y+kH]=H-2kX,$$
 $$[\alpha(H),\alpha(X)]=[-2kX+H,X]=2X,$$
 $$[\alpha(H),\alpha(Y)]=[-2kX+H,-k^{2}X+Y+kH]=-2kH+2k^{2}X-2Y.$$
Therefore, for $a, a^{\prime}\in L,$  we have 
$$\alpha[a,a^{\prime}]=[\alpha(a), \alpha(a^{\prime})].$$
 Similarly, we have $$\beta[a,a^{\prime}]=[\beta(a), \beta(a^{\prime})].$$
 In view of Proposition~\ref{ppp}, we define the 
 linear map $ \{ - \} \colon L\otimes L\rightarrow L,$
 \begin{gather*}
 \{ a,b \} = [ \alpha (a) ,\beta (b) ] ,\qquad \text{for all} \ \ a,b\in L.
\end{gather*}
Then $L_{( \alpha ,\beta ) }:=(L, \{-\}, \alpha ,
\beta )$ is a Bihom-Lie algebra. Namely, $\big(sl(2,\K), \{-\}, \alpha ,
\beta \big)$ is a Bihom-Lie algebra.
 \end{exam}

\begin{rema}
We present an example of a Bihom-Lie algebra that 
cannot be expressed as a Hom-Lie algebra. By Example~\ref{exam1},
$\big(L=sl(2,\K), \{-\}, \alpha ,\beta \big)$ is a Bihom-Lie algebra. If we set
$$\alpha(X)=X, \quad \alpha(Y)=-k^{2}X+Y+kH,\quad \alpha(H)=-2kX+H,$$
$$\beta(X)=X, \quad \beta(Y)=Y,\quad \beta(H)=H,$$
where $k$ are parameters in $\K$, with $k\neq 0$.  
For every $a,b\in L$ we have  $$\{-\}(a\otimes b)
=[\alpha(a), \beta(b)]=[\alpha(a), b].$$  Now we 
check that $(L, \{-\}, \alpha )$ is not a Hom-Lie algebra.  We only need to verify:
\begin{eqnarray*}
 & &\big\{\alpha(X),\{Y,H\} \big\}+\big\{\alpha(Y),\{H,X\} 
 \big\}+\big\{\alpha(H),\{X,Y\} \big\}\\
 &=&\big\{\alpha(X),[\alpha(Y),H] \big\}+\big\{\alpha(Y),
 [\alpha(H),X] \big\}+\big\{\alpha(H),[\alpha(X),Y] \big\}\\
 &=&\big\{X, [-k^{2}X+Y+kH,H] \big\}
 +\big\{ -k^{2}X+Y+kH,[-2kX+H,X] \big\}+\big\{-2kX+H,[X ,Y] \big\}\\
 &=&\big\{X, 2k^{2}X+2Y \big\}
 +\big\{ -k^{2}X+Y+kH,2X \big\}+\big\{-2kX+H,H\big\}\\
 &=&[\alpha(X), 2k^{2}X+2Y ]+[ \alpha(-k^{2}X+Y+kH),2X ]+[\alpha(-2kX+H),H]\\
 &=&[X,2k^{2}X+2Y ]+[-4k^{2}X+Y+2kH, 2X]+[-4kX+H,H]\\
 &=&2H+(-2H+8kX)+8kX\\
 &=&16kX\neq0 ~~(since ~ k\neq0).
\end{eqnarray*}
Thus, $(L, \{-\}, \alpha )$ is not a Hom-Lie algebra 
even though $\big(L=sl(2,\K), \{-\}, \alpha, \beta \big)$ is a Bihom-Lie algebra.

\end{rema}

~~~ Consider the direct sum of two Bihom-Lie algebras, we have
\begin{prop}
Given two Bihom-Lie algebras $( L, [-] ,\alpha, 
\beta )$ and $(L^{\prime}, [-]^{\prime} ,\alpha^{\prime}, 
\beta^{\prime})$, there is a Bihom-Lie
algebra $(L\oplus L^{\prime}, [\cdot, \cdot]_{L\oplus L^{\prime}}, 
\alpha+\alpha^{\prime}, \beta+\beta^{\prime} ),$ 
where the skew-symmetric bilinear map
 $[\cdot, \cdot]_{L\oplus L^{\prime}} : 
 \wedge^{2}(L\oplus L^{\prime})\rightarrow L\oplus L^{\prime}$  is given by
$$[(u_{1}, v_{1}), (u_{2}, v_{2})]_{L\oplus L^{\prime}}
= ([u_{1}, u_{2}], [v_{1}, v_{2}]^{\prime}), 
\forall u_{1}, u_{2}\in L, v_{1}, v_{2} \in L^{\prime},$$
and the linear maps $(\alpha+\alpha^{\prime}), 
(\beta+\beta^{\prime}):L\oplus L^{\prime}
\rightarrow L\oplus L^{\prime}$ are given by
$$(\alpha+\alpha^{\prime})(u,v)=\big(\alpha(u), 
\alpha^{\prime}(v)\big),$$
$$(\beta+\beta^{\prime})(u,v)=\big(\beta(u), 
\beta^{\prime}(v)\big), \forall u\in L, v\in L^{\prime}.$$
\end{prop}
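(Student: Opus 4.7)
The plan is to verify, componentwise, that the quadruple $\bigl(L\oplus L', [\cdot,\cdot]_{L\oplus L'}, \alpha+\alpha', \beta+\beta'\bigr)$ satisfies the four conditions in Definition~2.1(2): the commutation $(\alpha+\alpha')\circ(\beta+\beta')=(\beta+\beta')\circ(\alpha+\alpha')$, the bilinearity/skew-symmetry of the bracket, and the Bihom-Jacobi identity. Since the maps $\alpha+\alpha'$, $\beta+\beta'$ and the bracket $[\cdot,\cdot]_{L\oplus L'}$ all act diagonally on the direct sum, each axiom on $L\oplus L'$ will split into the corresponding axiom on $L$ and on $L'$, both of which hold by hypothesis.

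First I would check the commutation condition: for $(u,v)\in L\oplus L'$,
\[
(\alpha+\alpha')(\beta+\beta')(u,v)=(\alpha\beta(u),\alpha'\beta'(v))=(\beta\alpha(u),\beta'\alpha'(v))=(\beta+\beta')(\alpha+\alpha')(u,v),
\]
using $\alpha\beta=\beta\alpha$ in $L$ and $\alpha'\beta'=\beta'\alpha'$ in $L'$. Next, the skew-symmetry (equation~(\ref{b})) follows immediately: applying $(\beta+\beta')$ and $(\alpha+\alpha')$ to the two arguments and then the componentwise bracket, each component becomes $[\beta(u_i),\alpha(u_j)]$ or $[\beta'(v_i),\alpha'(v_j)]'$, and the required antisymmetry in $(i,j)$ holds in each factor.

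For the Bihom-Jacobi condition (equation~(\ref{c})), I would plug three elements $(u_1,v_1),(u_2,v_2),(u_3,v_3)\in L\oplus L'$ into the cyclic sum and expand using the definitions of the bracket and the maps. Every term on the left-hand side takes the form $\bigl([\beta^2(u_i),[\beta(u_j),\alpha(u_k)]],[\beta'^2(v_i),[\beta'(v_j),\alpha'(v_k)]']'\bigr)$. Collecting over the three cyclic permutations, the first components sum to the Bihom-Jacobi expression in $L$ (hence zero) and the second components sum to the Bihom-Jacobi expression in $L'$ (hence zero), so the total equals $(0,0)$. Finally, one observes that $\alpha+\alpha'$ and $\beta+\beta'$ need not be assumed to be algebra morphisms in the statement — we only need the Bihom-Lie axioms of Definition~2.1(2), which are exactly what was verified above.

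There is no real obstacle here; the computation is entirely routine because the bracket and structure maps respect the direct-sum decomposition. The only point requiring a small amount of care is writing the Bihom-Jacobi sum correctly with the $\beta^2$ and $\beta$ twists in the right places; once the identity is assembled in components, it reduces term-by-term to the two given Bihom-Jacobi identities in $L$ and $L'$.
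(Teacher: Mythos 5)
Your proposal is correct and follows exactly the route the paper intends: the paper's own proof is the one-line remark that Eq.~(2.1)--(2.3) are easily verified for the direct sum, and your componentwise verification simply fills in those routine details. Nothing is missing and nothing differs in substance.
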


\begin{proof}
It is easy to verify Eq.(\ref{a})- Eq.(\ref{c})  for
 $(L\oplus L^{\prime}, [\cdot, \cdot]_{L\oplus L^{\prime}},
 \alpha+\alpha^{\prime}, \beta+\beta^{\prime} ).$
\end{proof}

\begin{defi}
A sub-vector space $\mathfrak{H}\subset L$ is a
Bihom-Lie sub-algebra of  $( L, [-] ,\alpha ,\beta )$
if $\alpha(\mathfrak{H})\subset\mathfrak{H}, \beta(\mathfrak{H})
\subset\mathfrak{H}$ and $\mathfrak{H}  $
is closed under the bracket operation $[-]$  , i.e.
$$[u, u^{\prime}]\in\mathfrak{H},  ~\forall ~u, u^{\prime}\in\mathfrak{H}.$$

\end{defi}

~~~ A morphism $f :( L, [-] ,\alpha ,\beta )\rightarrow(L^{\prime},
[-]^{\prime} ,\alpha^{\prime} ,\beta^{\prime})$
 of Bihom-Lie algebras is a linear map $f : L\rightarrow L^{\prime}$
 such that $\alpha^{\prime}\circ f=f\circ\alpha,$ $\beta^{\prime}\circ f=f\circ\beta$
and $f([u,v])=[f(u), f(v)]^{\prime}$,  for all $u, v\in L$.

~~~ Denote by $\phi_{f}\subset L\oplus L^{\prime}$ the graph of
a linear map $f: L\rightarrow L^{\prime}$.

\begin{prop}
 A linear map $f: ( L, [-] ,\alpha ,\beta )\rightarrow(L^{\prime},
 [-]^{\prime} ,\alpha^{\prime} ,\beta^{\prime})$ is a morphism of
  Bihom-Lie algebras if and only if the graph
  $\phi_{f}\subset L\oplus L^{\prime}$ is a Bihom-Lie-sub-algebra
  of $(L\oplus L^{\prime}, [\cdot, \cdot]_{L\oplus L^{\prime}},
  \alpha+\alpha^{\prime}, \beta+\beta^{\prime} ).$
\end{prop}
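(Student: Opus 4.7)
The plan is to unpack the definition of Bihom-Lie-sub-algebra on the graph $\phi_f=\{(u,f(u)) : u\in L\}\subset L\oplus L'$, using the fact that $\phi_f$ is automatically a linear subspace since $f$ is linear. The three closure conditions required of a Bihom-Lie-sub-algebra (closure under $\alpha+\alpha'$, closure under $\beta+\beta'$, closure under $[\cdot,\cdot]_{L\oplus L'}$) will turn out to be equivalent, respectively, to the three defining conditions of a morphism ($\alpha'\circ f=f\circ\alpha$, $\beta'\circ f=f\circ\beta$, and $f([u,v])=[f(u),f(v)]'$), at which point both implications follow at once.

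For the ``only if'' direction, I would assume $f$ is a morphism and compute, for any $u\in L$,
\[
(\alpha+\alpha')(u,f(u))=(\alpha(u),\alpha'(f(u)))=(\alpha(u),f(\alpha(u)))\in\phi_f,
\]
using $\alpha'\circ f=f\circ\alpha$. The analogous calculation for $\beta+\beta'$ uses $\beta'\circ f=f\circ\beta$. For the bracket, for $u_1,u_2\in L$,
\[
[(u_1,f(u_1)),(u_2,f(u_2))]_{L\oplus L'}=([u_1,u_2],[f(u_1),f(u_2)]')=([u_1,u_2],f([u_1,u_2]))\in\phi_f,
\]
using $f([u_1,u_2])=[f(u_1),f(u_2)]'$. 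Thus $\phi_f$ is a Bihom-Lie-sub-algebra.

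For the ``if'' direction, I would simply reverse each of these computations. Assuming $\phi_f$ is a Bihom-Lie-sub-algebra, closure of $\phi_f$ under $\alpha+\alpha'$ applied to $(u,f(u))$ gives $(\alpha(u),\alpha'(f(u)))\in\phi_f$, whose second coordinate must equal $f$ of its first, yielding $\alpha'(f(u))=f(\alpha(u))$; the $\beta$ identity follows identically. Closure under the bracket applied to $(u_1,f(u_1)),(u_2,f(u_2))$ produces $([u_1,u_2],[f(u_1),f(u_2)]')\in\phi_f$, forcing $[f(u_1),f(u_2)]'=f([u_1,u_2])$. Hence $f$ is a morphism of Bihom-Lie algebras.

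There is no real obstacle: the statement is a tautological consequence of matching definitions, and the commutation $\alpha\circ\beta=\beta\circ\alpha$ (needed to know $\alpha+\alpha'$ and $\beta+\beta'$ commute on $L\oplus L'$) as well as the Bihom-Jacobi and skew-symmetry identities on $L\oplus L'$ are not used, because they hold on the ambient Bihom-Lie algebra and restrict automatically to any sub-vector-space closed under the bracket and the two twisting maps. The only point to double-check is the bookkeeping that the graph of a linear map is itself a linear subspace, which is immediate.
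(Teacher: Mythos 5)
Your proposal is correct and follows essentially the same route as the paper: both directions are obtained by unpacking the definition of the bracket and of $\alpha+\alpha^{\prime}$, $\beta+\beta^{\prime}$ on elements $(u,f(u))$ of the graph, and matching the closure conditions with the three morphism identities. The only difference is cosmetic — you add the (correct) remark that skew-symmetry and the Bihom-Jacobi identity restrict automatically to the sub-space, which the paper leaves implicit.
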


\begin{proof}
Let $f: ( L, [-] ,\alpha ,\beta )\rightarrow(L^{\prime}, [-]^{\prime},
\alpha^{\prime} ,\beta^{\prime})$ is a morphism of Bihom-Lie algebras,
then for any $u_{1}, u_{2}\in L,$  we have
$$[\big(u_{1}, f(u_{1})\big), \big(u_{2}, f(u_{2})
\big)]_{L\oplus L^{\prime}}=([u_{1},u_{2}], [f(u_{1}),
f(u_{2})]^{\prime})=([u_{1},u_{2}], f[u_{1},u_{2}])$$
Thus the graph $\phi_{f}$ is closed under the bracket
operation $[\cdot,\cdot]_{L\oplus L^{\prime}}$.  Furthermore, we have
$$(\alpha+\alpha^{\prime})\big(u_{1},f(u_{1})\big)=\big(\alpha(u_{1}), \alpha^{\prime}\circ f(u_{1})\big)=\big(\alpha(u_{1}), f\circ\alpha(u_{1})\big)$$
which implies that $$(\alpha+\alpha^{\prime})(\phi_{f})\subset\phi_{f}.$$
Similarly, $$(\beta+\beta^{\prime})(\phi_{f})\subset\phi_{f}.$$
Thus $\phi_{f}$ is a Bihom-Lie-sub-algebra of $(L\oplus L^{\prime},
[\cdot, \cdot]_{L\oplus L^{\prime}}, \alpha+\alpha^{\prime}, \beta+\beta^{\prime} ).$

~~~Conversely, if the graph $\phi_{f}$ is a Bihom-Lie-sub-algebra of
  $(L\oplus L^{\prime}, [\cdot, \cdot]_{L\oplus L^{\prime}},
  \alpha+\alpha^{\prime}, \beta+\beta^{\prime} ),$
  then we have $$[\big(u_{1}, f(u_{1})\big), \big(u_{2},
  f(u_{2})\big)]_{L\oplus L^{\prime}}=\big([u_{1},u_{2}],
  [f(u_{1}), f(u_{2})]^{\prime}\big)\in\phi_{f},$$
 which implies that $$[f(u_{1}), f(u_{2})]^{\prime}=f[u_{1}, u_{2}].$$
 Furthermore, $(\alpha+\alpha^{\prime})(\phi_{f})\subset\phi_{f}$ yields that
  $$(\alpha+\alpha^{\prime})\big(u_{1},f(u_{1})\big)=\big(\alpha(u_{1}), \alpha^{\prime}\circ f(u_{1})\big)\in\phi_{f},$$
  which is equivalent to the condition $\alpha^{\prime}\circ f(u_{1})=f\circ\alpha(u_{1}) ,$ i.e. $\alpha^{\prime}\circ f=f\circ\alpha.$
  Similarly, $\beta^{\prime}\circ f=f\circ\beta.$
  Therefore, $f$ is a morphism of Bihom-Lie algebras.
\end{proof}

\section{Derivations of Bihom-Lie algebras}

 In this section, we will study derivations of  Bihom-Lie algebras. Let $( L,[-] ,\alpha ,\beta ) $ be a regular Bihom-Lie algebra.  For any integer $k, l$, denote by $\alpha^{k}$ the $k$-times composition of $\alpha$ and $\beta^{l}$ the $l$-times composition of $\beta$, i.e.
 $$\alpha^{k}=\alpha\circ\cdots\circ\alpha (k-times), \beta^{l}=\beta\circ\cdots\circ\beta(l-times).$$
 Since the maps $\alpha, \beta$ commute, we denote by $$\alpha^{k}\beta^{l}=\underbrace{\alpha\circ\cdots\circ\alpha}_{k-times}\circ\underbrace{\beta\circ\cdots\circ\beta}_{l-times}.$$
In particular, $\alpha^{0}\beta^{0}=Id, \alpha^{1}\beta^{1}=\alpha\beta$, $\alpha^{-k}\beta^{-l}$ is the inverse of $\alpha^{k}\beta^{l}.$

\begin{defi}
For any integer $k, l$,  a linear map $D: L\rightarrow L$ is called an $\alpha^{k}\beta^{l}$-derivation of the regular Bihom-Lie algebra $( L,[-] ,\alpha ,\beta ) $, if
\begin{gather}
\label{e}
D\circ\alpha=\alpha\circ D, \quad D\circ\beta=\beta\circ D,
\end{gather}
and
\begin{gather}
D[u,v]=[D(u), \alpha^{k}\beta^{l}(v)]+[\alpha^{k}\beta^{l}(u), D(v)],\quad \forall u,v\in L.
\end{gather}

\end{defi}

~~~Note first that if $\alpha$ and $\beta$ are bijective, 
the skew-symmetry condition  Eq.(\ref{b}) implies
\begin{gather}
\label{d}[u, v]=-[\alpha^{-1}\beta(v), \alpha\beta^{-1}(u)].
\end{gather}

Denote by $Der_{\alpha^{k}\beta^{l}}(L)$ the set of 
$\alpha^{k}\beta^{l}$-derivations of the Bihom-Lie algebra 
$( L,[-] ,\alpha ,\beta ) $.  For any $u\in L$ satisfying 
$\alpha(u)=u, \beta(u)=u,$  define $D_{k,l}(u)\in gl(L)$ by
$$D_{k,l}(u)(v)=-[\alpha^{k}\beta^{l}(v), u], \quad \forall v\in L.$$
By Eq.(\ref{d}),
\begin{eqnarray*}
D_{k,l}(u)(v)&=&-[\alpha^{k}\beta^{l}(v), u]\\
&=&[\alpha^{-1}\beta(u), \alpha^{k+1}\beta^{l-1}(v)]\\
&=&[u,\alpha^{k+1}\beta^{l-1}(v) ].
\end{eqnarray*}
Then $D_{k,l}(u)$ is an $\alpha^{k+1}\beta^{l}$-derivation, which we call an $\mathit{inner}$ $\alpha^{k+1}\beta^{l}$-derivation. In  fact, we have
$$D_{k,l}(u)(\alpha(v))=-[\alpha^{k+1}\beta^{l}(v),u]=-\alpha[\alpha^{k}\beta^{l}(v),u]=\alpha \circ D_{k,l}(u)(v),$$

$$D_{k,l}(u)(\beta(v))-[\alpha^{k}\beta^{l+1}(v),u]=-\beta[\alpha^{k}\beta^{l}(v),u]=\beta\circ D_{k,l}(u)(v),$$
which implies that Eq.(\ref{e}) in Definition 3.1 is satisfied. On the other hand, we have
 \begin{eqnarray*}
D_{k,l}(u)([v,w])&=&-\big[\alpha^{k}\beta^{l}[v,w], u\big]
=\big[u, \alpha^{k+1}\beta^{l-1}[v,w]\big]\\
&=&\big[\beta^{2}(u), [\beta\alpha^{k+1}\beta^{l-2}(v),
\alpha\alpha^{k}\beta^{l-1}(w)]\big]\\
&=&-\big[\alpha^{k+1}\beta^{l}(v),[\alpha^{k}\beta^{l}(w),
\alpha(u)]\big]-\big[\alpha^{k}\beta^{l+1}(w),
[\beta(u),\alpha^{k+2}\beta^{l-2}(v)]\big]\\
&=&-\big[\alpha^{k+1}\beta^{l}(v),[\alpha^{k}\beta^{l}(w),u]\big]
+\big[[u,\alpha^{k+1}\beta^{l-1}(v)],\alpha^{k+1}\beta^{l}(w)\big]\\
&=&[\alpha^{k+1}\beta^{l}(v),D_{k,l}(u)(w) ]+[D_{k,l}(u)(v), \alpha^{k+1}\beta^{l}(w)].
\end{eqnarray*}
Therefore, $D_{k,l}(u)$ is an $\alpha^{k+1}\beta^{l}$-derivation.
Denote  by $Inn_{\alpha^{k}\beta^{l}}(L)$ the set of
inner $\alpha^{k}\beta^{l}$-derivations, i.e.
$$Inn_{\alpha^{k}\beta^{l}}(L)=\{-[\alpha^{k-1}
\beta^{l}(\cdot), u]\mid u\in L, \alpha(u)=u, \beta(u)=u\}.$$

~~~~~~For any $D\in Der_{\alpha^{k}\beta^{l}}(L)$ and
$D^{\prime}\in Der_{\alpha^{s}\beta^{t}}(L)$, 
define their commutator $[D, D^{\prime} ]$  as usual:
\begin{gather}
\label{f}
[D, D^{\prime}]=D\circ D^{\prime}-D^{\prime}\circ D.
\end{gather}
\begin{lemm}
For any $D\in Der_{\alpha^{k}\beta^{l}}(L)$ and $D^{\prime}\in Der_{\alpha^{s}\beta^{t}}(L)$, we have $$[D, D^{\prime}]\in Der_{\alpha^{k+s}\beta^{l+t}}(L).$$
\end{lemm}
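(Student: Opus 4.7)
The plan is to verify the two defining properties of an $\alpha^{k+s}\beta^{l+t}$-derivation for $[D,D'] = D\circ D' - D'\circ D$. The commutativity conditions $[D,D']\circ\alpha = \alpha\circ[D,D']$ and $[D,D']\circ\beta = \beta\circ[D,D']$ are immediate, since both $D$ and $D'$ individually commute with $\alpha$ and $\beta$ by Eq.(\ref{e}); composing in either order preserves this.

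The substantive step is checking the twisted Leibniz rule. I would compute $DD'([u,v])$ and $D'D([u,v])$ separately by applying the derivation property twice. First expand $D'[u,v] = [D'(u), \alpha^s\beta^t(v)] + [\alpha^s\beta^t(u), D'(v)]$ and then apply $D$, using $D\alpha^s\beta^t = \alpha^s\beta^t D$ (and $D'$ commuting with $\alpha^k\beta^l$) to rewrite intermediate terms. This gives four bracket terms involving $DD'(u)$, $DD'(v)$, $\alpha^k\beta^l D'(u)$ paired with $\alpha^s\beta^t D(v)$, and $\alpha^s\beta^t D(u)$ paired with $\alpha^k\beta^l D'(v)$, each with $\alpha^{k+s}\beta^{l+t}$ applied to the remaining argument.

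The analogous expansion of $D'D([u,v])$ produces exactly the same four types of terms, but with $D'D$ replacing $DD'$ in the two ``outer'' brackets and with the two ``cross'' brackets appearing with the same signs. Subtracting, the two cross terms cancel identically, and what remains collapses to
\begin{gather*}
[D,D']([u,v]) = \bigl[[D,D'](u),\, \alpha^{k+s}\beta^{l+t}(v)\bigr] + \bigl[\alpha^{k+s}\beta^{l+t}(u),\, [D,D'](v)\bigr],
\end{gather*}
which is the required derivation identity.

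The main obstacle is purely bookkeeping: one must be careful that $D$ commutes through $\alpha^s\beta^t$ (and $D'$ through $\alpha^k\beta^l$) so that the cross terms in $DD'([u,v])$ and $D'D([u,v])$ match exactly before subtraction. No use of the Bihom-Jacobi identity (\ref{c}) or of skew-symmetry (\ref{b}) is needed here; the commutativity $\alpha\beta=\beta\alpha$ from (\ref{a}) enters only implicitly through the unambiguous meaning of $\alpha^{k+s}\beta^{l+t}$.
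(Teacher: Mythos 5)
Your proposal is correct and follows essentially the same route as the paper: expand $D\circ D'([u,v])$ and $D'\circ D([u,v])$ via the twisted Leibniz rule, use the pairwise commutativity of $D$, $D'$, $\alpha$, $\beta$ to match and cancel the cross terms, and note that commutation of $[D,D']$ with $\alpha$ and $\beta$ is immediate. Your added observation that neither skew-symmetry nor the Bihom-Jacobi identity is needed is accurate and consistent with the paper's argument.
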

\begin{proof}
For any $u,v\in L$, we have
\begin{eqnarray*}
[D, D^{\prime}]([u,v])&=&D\circ D^{\prime}([u,v])-D^{\prime}\circ D([u,v])\\
&=& D([D^{\prime}(u),\alpha^{s}\beta^{t}(v)]+[\alpha^{s}\beta^{t}(u),D^{\prime}(v)])\\
& &- D^{\prime}([D(u),\alpha^{k}\beta^{l}(v)]+[\alpha^{s}\beta^{t}(u), D(v)])\\
&=&[D\circ D^{\prime}(u),\alpha^{k}\beta^{l}\alpha^{s}\beta^{t}(v)]
+[\alpha^{k}\beta^{l}D^{\prime}(u),D\big(\alpha^{s}\beta^{t}(v)\big)] \\
& & +[D\big(\alpha^{s}\beta^{t}(u)\big),\alpha^{k}\beta^{l}D^{\prime}(v)]
+[\alpha^{k}\beta^{l}\alpha^{s}\beta^{t}(u),D\circ D^{\prime}(v)]            \\
& &-[D^{\prime}\circ D(u), \alpha^{s}\beta^{t}\alpha^{k}\beta^{l}(v)]-[\alpha^{s}\beta^{t}D(u), D^{\prime}\big(\alpha^{k}\beta^{l}(v)\big)]\\
& &-[D^{\prime}\big(\alpha^{k}\beta^{l}(u)\big),\alpha^{s}\beta^{t}D(v)]
-[\alpha^{s}\beta^{t}\alpha^{k}\beta^{l}(u),D^{\prime}\circ D(v)]
\end{eqnarray*}
Since any two of maps $D, D^{\prime}, \alpha, \beta$ commute, we have
$$\beta^{l}\circ D^{\prime}=D^{\prime}\circ\beta^{l},
\alpha^{k}\circ D^{\prime}=D^{\prime}\circ\alpha^{k},
\beta^{t}\circ D=D\circ\beta^{t}, \alpha^{s}\circ D=D\circ\alpha^{s}$$
Therefore,we have $$[D, D^{\prime}]([u,v])=\big[[D, D^{\prime}](u),\alpha^{k+s}\beta^{l+t}(v)\big]
+\big[\alpha^{k+s}\beta^{l+t}(u),[D, D^{\prime}](v)\big].$$

Furthermore, it is straightforward to see that
$$[D, D^{\prime}]\circ\alpha=D\circ D^{\prime}\circ\alpha
-D^{\prime}\circ D\circ\alpha=\alpha\circ D\circ D^{\prime}
-\alpha \circ D^{\prime}\circ D=\alpha\circ[D, D^{\prime}],$$
$$[D, D^{\prime}]\circ\beta=D\circ D^{\prime}\circ\beta
-D^{\prime}\circ D\circ\beta=\beta\circ D\circ D^{\prime}-\beta
\circ D^{\prime}\circ D=\beta\circ[D, D^{\prime}],$$
which yields that $[D, D^{\prime}]\in Der_{\alpha^{k+s}\beta^{l+t}}(L).$
\end{proof}

~~~~~~For any integer $k, l$, denote by $Der(L)=\bigoplus_{k,l}
Der_{\alpha^{k}\beta^{l}}(L)$. Obviously, $Der(L)$ is
a Lie algebra, in which the Lie bracket is given by Eq.(\ref{f}).

~~~~~~In the end, we consider the derivation extension of the
regular Bihom-Lie algebra $( L,[-] ,\alpha ,\beta ) $ and give an
application of the $\alpha^{0}\beta^{1}$-derivation $Der_{\alpha^{0}\beta^{1}}(L)$.

~~~~~~ For any linear maps $D, \alpha,\beta: L\rightarrow L$, 
where $\alpha$ and $\beta$ are inverse, 
consider the vector space $L\oplus\R D$. Define a skew-symmetric
bilinear bracket operation $[\cdot,\cdot]_{D}$ on  $L\oplus\R D$ by
 $$[u,v]_{D}=[u,v], \quad [D,u]_{D}=-[\alpha^{-1}\beta(u),
 \alpha\beta^{-1}D]_{D}=D(u), \quad \forall u,v \in L.$$
 Define two linear maps $\alpha_{D},\beta_{D}: L\oplus\R D
 \rightarrow L\oplus\R D$ by $$\alpha_{D}(u, D)=\big(\alpha(u), D\big),
 \quad \beta_{D}(u, D)=\big(\beta(u), D\big).$$
 And the linear maps $\alpha,\beta$ involved in the definition
 of the bracket operation $[\cdot,\cdot]_{D}$ are required to be multiplicative, that is
 $$\alpha\circ[D,u]_{D=}=[\alpha\circ D, \alpha(u)]_{D},
 \quad \beta\circ[D,u]_{D}=[\beta\circ D, \beta(u)]_{D}.$$
 Then, we have
 \begin{eqnarray*}
[u, D]_{D}&=&-[\alpha^{-1}\beta D, \alpha\beta^{-1}(u)]_{D}\\
&=&-\alpha^{-1}\beta\circ[D, \alpha^{2}\beta^{-2}(u)]_{D}\\
&=&-\alpha^{-1}\beta\circ D\big(\alpha^{2}\beta^{-2}(u)\big) \\
&=&-\alpha\beta^{-1}\circ D(u).
\end{eqnarray*}

\begin{theo}
With the above notations, $(L\oplus\R D, [\cdot,\cdot]_{D},\alpha_{D},
\beta_{D})$ is a Bihom-Lie algebra if and only if $D$ is
an $\alpha^{0}\beta^{1}$-derivation of the regular
Bihom-Lie algebra $(L, [\cdot,\cdot], \alpha, \beta).$

\end{theo}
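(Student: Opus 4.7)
The plan is to verify each of the Bihom-Lie axioms for $(L\oplus\R D,[\cdot,\cdot]_D,\alpha_D,\beta_D)$ directly, keeping track of what property of $D$ each step forces, so that both directions of the equivalence come out of the same computation. The commutation $\alpha_D\circ\beta_D=\beta_D\circ\alpha_D$ is immediate from $\alpha\circ\beta=\beta\circ\alpha$ together with the fact that $\alpha_D$ and $\beta_D$ both fix $D$. Skew-symmetry on pairs in $L$ is inherited; the relation $[D,u]_D=-[\alpha^{-1}\beta(u),\alpha\beta^{-1}D]_D$ is built into the definition, and the short computation preceding the theorem already records $[u,D]_D=-\alpha\beta^{-1}\circ D(u)$. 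Combined with skew-symmetry this forces $[D,D]_D=0$. The substantive content therefore lies in the Bihom-Jacobi identity (\ref{c}).

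Because the cyclic sum in (\ref{c}) is invariant under $(a,a',a'')\mapsto(a',a'',a)$, I would split into cases by how many of the three arguments equal $D$. All three in $L$ is exactly the Bihom-Jacobi of $L$. When two are $D$ and one is $u\in L$, using $\alpha_D(D)=\beta_D(D)=D$ and $[D,D]_D=0$ the cyclic sum collapses to expressions of the form $[D,D\alpha(u)]_D-[D,\alpha D(u)]_D$, which vanish as soon as $D\circ\alpha=\alpha\circ D$, i.e.\ by (\ref{e}). The key case is exactly one $D$, placed third by cyclicity, with $a,a'\in L$. Expanding the first two terms using $[D,x]_D=D(x)$, $[x,D]_D=-\alpha\beta^{-1}D(x)$ and $D\beta=\beta D$ gives $-[\beta^2(a),\alpha D(a')]+[\beta^2(a'),\alpha D(a)]$, while the third is $D[\beta(a),\alpha(a')]$. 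Rewriting $-[\beta^2(a'),\alpha D(a)]=[\beta D(a),\alpha\beta(a')]$ via the twisted skew-symmetry (\ref{d}) in $L$, vanishing of the cyclic sum becomes
\[
D[\beta(a),\alpha(a')]=[\beta D(a),\alpha\beta(a')]+[\beta^2(a),\alpha D(a')].
\]
Setting $x=\beta(a)$, $y=\alpha(a')$ and using bijectivity of $\alpha,\beta$ together with $D\alpha=\alpha D$ and $D\beta=\beta D$, this is exactly $D[x,y]=[D(x),\beta(y)]+[\beta(x),D(y)]$, i.e.\ the $\alpha^0\beta^1$-derivation identity.

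Both directions then drop out of the same calculation: the ``if'' part feeds the derivation identity into Case~1 to close Bihom-Jacobi, and the ``only if'' part reads the derivation identity off of Case~1 as $a,a'$ range over $L$. The main obstacle I anticipate is the bookkeeping in Case~1, in particular choosing the right application of (\ref{d}) so that the three cyclic terms reassemble into the derivation identity at exactly twisting degree $(k,l)=(0,1)$ and not at some other degree; the asymmetric appearance of $\alpha,\beta$ in (\ref{c}) (via $\beta^2$ outside and $\beta,\alpha$ inside) is precisely what pins down these exponents and explains why the derivation extension singles out $Der_{\alpha^{0}\beta^{1}}(L)$.
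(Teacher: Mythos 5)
Your proposal is correct and follows essentially the same route as the paper: a direct verification of each Bihom-Lie axiom for $L\oplus\R D$, with the commutation relations $D\alpha=\alpha D$, $D\beta=\beta D$ absorbing the multiplicativity and mixed skew-symmetry checks, and the Bihom-Jacobi identity with exactly one argument equal to $D$ reducing (via Eq.~(\ref{d}) and bijectivity of $\alpha,\beta$) to the $\alpha^{0}\beta^{1}$-derivation identity. The only difference is cosmetic: you organize the Jacobi verification by the number of $D$'s among the three arguments, whereas the paper expands general elements $(u,mD),(v,nD),(w,lD)$ and lets the coefficients $m,n,l$ do the same bookkeeping.
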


\begin{proof}
For any $u,v\in L, m,n\in\R,$ we have
$$\alpha_{D}\circ\beta_{D}(u, mD)=\alpha_{D}\circ\big(\beta(u), mD\big)
=\big(\alpha\circ\beta(u), mD\big),$$ and
$$\beta_{D}\circ\alpha_{D}(u, mD)=\beta_{D}\circ
\big(\alpha(u), mD\big)=\big(\beta\circ\alpha(u), mD\big).$$
Hence, we have $$\alpha\circ \beta=\beta\circ\alpha \Longleftrightarrow \alpha_{D}\circ\beta_{D}=\beta_{D}\circ\alpha_{D}.$$
On the other hand,
\begin{eqnarray*}
\alpha_{D}[(u, mD),(v,nD)]_{D}&=&\alpha_{D}([u,v]_{D}+[u,nD]_{D}+[mD,v]_{D})\\
&=&\alpha_{D}\big([u,v]-nD\circ\alpha\beta^{-1}(u)+mD(v)\big)\\
&=&\alpha([u,v])-n\alpha\circ D\circ\alpha\beta^{-1}(u)+m\alpha \circ D(v),
\end{eqnarray*}
\begin{eqnarray*}
[\alpha_{D}(u,mD),\alpha_{D}(v,nD)]_{D}&=&[(\alpha(u),mD),(\alpha(v),nD)]_{D}\\
&=&[\alpha(u),\alpha(v)]_{D}+[\alpha(u),nD]_{D}+[mD, \alpha(v)]_{D}\\
&=&[\alpha(u),\alpha(v)]-\alpha\beta^{-1}\circ nD\circ\alpha(u)+mD\circ\alpha(v).
\end{eqnarray*}
Since $\alpha([u,v])=[\alpha(u),\alpha(v)]$, thus
$$\alpha_{D}[(u,mD),(v,nD)]_{D}=[\alpha_{D}(u,mD),\alpha_{D}(v,nD)]_{D}$$
if and only if $$D\circ\alpha=\alpha\circ D, D\circ\beta=\beta\circ D.$$
 Similarly,
 $$\beta_{D}[(u,mD),(v,nD)]_{D}=[\beta_{D}(u,mD),\beta_{D}(v,nD)]_{D}$$
 if and only if $$D\circ\alpha=\alpha\circ D, D\circ\beta=\beta\circ D.$$
Next, we have
\begin{eqnarray*}
[\beta_{D}(u,mD),\alpha_{D}(v,nD)]_{D}&=&[(\beta(u),mD),(\alpha(v),nD)]_{D}\\
&=&[\beta(u),\alpha(v)]_{D}+[\beta(u),nD]_{D}+[mD, \alpha(v)]_{D}\\
&=&-[\beta(v), \alpha(u) ]-\alpha\beta^{-1}\circ nD\circ\beta(u)+mD\circ\alpha(v),
\end{eqnarray*}
\begin{eqnarray*}
[\beta_{D}(v,nD),\alpha_{D}(u,mD)]_{D}&=&[(\beta(v),nD),(\alpha(u),mD)]_{D}\\
&=&[\beta(v),\alpha(u)]_{D}+[\beta(v),mD]_{D}+[nD, \alpha(u)]_{D}\\
&=&[\beta(v),\alpha(u)]-\alpha\beta^{-1}\circ mD\circ\beta(v)+nD\circ\alpha(u),
\end{eqnarray*}
 thus
$$[\beta_{D}(u,mD),\alpha_{D}(v,nD)]_{D}=-[\beta_{D}(v,nD),\alpha_{D}(u,mD)]_{D}$$ if and only if $$D\circ\alpha=\alpha\circ D, D\circ\beta=\beta\circ D.$$
Furthermore,
\begin{eqnarray*}
& &\big[\beta_{D}^{2}(u,mD),[\beta_{D}(v,nD),\alpha_{D}(w,lD)]_{D}\big]_{D}
+\big[\beta_{D}^{2}(v,nD),[\beta_{D}(w,lD),\alpha_{D}(w,mD)]_{D}\big]_{D}\\
& & +\big[\beta_{D}^{2}(w,lD),[\beta_{D}(u,mD),\alpha_{D}(v,nD)]_{D}\big]_{D}\\
&=&\big[(\beta^{2}(u),mD),[(\beta(v),nD), (\alpha(w), lD)]_{D}\big]_{D}\\
& &+\big[(\beta^{2}(v),nD),[(\beta(w),lD), (\alpha(u), mD)]_{D}\big]_{D}\\
& &+\big[(\beta^{2}(w),lD),[(\beta(u),mD), (\alpha(v), nD)]_{D}\big]_{D}\\
&=&\big[(\beta^{2}(u),mD),[\beta(v),\alpha(w)]-l\alpha\circ D(v)+nD\circ\alpha(w)\big]_{D} \\
& & +\big[(\beta^{2}(v),nD),[\beta(w),\alpha(u)]-m\alpha\circ D(w)+lD\circ\alpha(u)\big]_{D}\\
& &+\big[(\beta^{2}(w),lD),[\beta(u),\alpha(v)]-n\alpha\circ D(u)+mD\circ\alpha(v)\big]_{D}\\
&=&\big[\beta^{2}(u),[\beta(v),\alpha(w)]\big]
-[\beta^{2}(u),l\alpha\circ D(v)]+[\beta^{2}(u),nD\circ\alpha(w)]\\
& &+\big[mD,[\beta(v),\alpha(w)]\big]_{D}-[mD,l\alpha\circ D(v)]_{D}+[mD,nD\alpha(w)]_{D}\\
& &+\big[\beta^{2}(v),[\beta(w),\alpha(u)]\big]
-[\beta^{2}(v),m\alpha\circ D(w)]+[\beta^{2}(v),lD\circ\alpha(u)]\\
& &+\big[nD,[\beta(w),\alpha(u)]\big]_{D}-[nD,m\alpha\circ D(w)]_{D}+[nD,lD\alpha(u)]_{D}\\
& &+\big[\beta^{2}(w),[\beta(u),\alpha(v)]\big]
-[\beta^{2}(w),n\alpha\circ D(u)]+[\beta^{2}(w),mD\circ\alpha(v)]\\
& &+\big[lD,[\beta(u),\alpha(v)]\big]_{D}-[lD,n\alpha\circ D(u)]_{D}+[lD,mD\alpha(v)]_{D}\\
&=&\big[\beta^{2}(u),[\beta(v),\alpha(w)]\big]
-[\beta^{2}(u),l\alpha\circ D(v)]+[\beta^{2}(u),nD\circ\alpha(w)]\\
& &+\big[mD,[\beta(v),\alpha(w)]\big]_{D}-ml\alpha\circ D^{2}(v)+mnD^{2}\circ\alpha(w)\\
& &+\big[\beta^{2}(v),[\beta(w),\alpha(u)]\big]
-[\beta^{2}(v),m\alpha\circ D(w)]+[\beta^{2}(v),lD\circ\alpha(u)]\\
& &+\big[nD,[\beta(w),\alpha(u)]\big]_{D}-mn\alpha\circ D^{2}(w)+nlD^{2}\circ\alpha(u)\\
& &+\big[\beta^{2}(w),[\beta(u),\alpha(v)]\big]
-[\beta^{2}(w),n\alpha\circ D(u)]+[\beta^{2}(w),mD\circ\alpha(v)]\\
& &+\big[lD,[\beta(u),\alpha(v)]\big]_{D}
-ln\alpha\circ D^{2}(u)+lmD^{2}\circ\alpha(v),
\end{eqnarray*}
If $D$ is an $\alpha^{0}\beta^{1}$-derivation of the
regular Bihom-Lie algebra $(L, [\cdot,\cdot], \alpha, \beta),$ then
\begin{eqnarray*}
\big[mD,[\beta(v),\alpha(w)]\big]_{D}&=&mD[\beta(v),\alpha(w)]\\
&=&[mD\circ\beta(v),\alpha^{0}\beta\alpha(w)]+[\alpha^{0}\beta^{2}(v),mD\circ\alpha(w)]\\
&=&-[\alpha^{0}\beta^{2}(w),mD\circ\alpha(v)]+[\alpha^{0}\beta^{2}(v),mD\circ\alpha(w)]\\
&=&-[\beta^{2}(w),mD\circ\alpha(v)]+[\beta^{2}(v),mD\circ\alpha(w)].
\end{eqnarray*}
Similarly, $$\big[nD,[\beta(w),\alpha(u)]\big]_{D}
=-[\beta^{2}(u),nD\circ\alpha(w)]+[\beta^{2}(w),n\alpha\circ D(u)],$$
$$\big[lD,[\beta(u),\alpha(v)]\big]_{D}=-[\beta^{2}(v),
lD\circ\alpha(u)]+[\beta^{2}(u),l\alpha\circ D(v)].$$
Therefore, the Bihom-Jacobi identity is satisfied if and only if
$D$ is an $\alpha^{0}\beta^{1}$-derivation of  $(L, [\cdot,\cdot], \alpha, \beta).$
Thus  $(L\oplus\R D, [\cdot,\cdot]_{D},\alpha_{D}, \beta_{D})$
is a Bihom-Lie algebra if and only if $D$ is an
$\alpha^{0}\beta^{1}$-derivation of the regular
Bihom-Lie algebra $(L, [\cdot,\cdot], \alpha, \beta).$
\end{proof}

\section{Representations of Bihom-Lie algebras }

Lie algebra cohomology was introduced by Chevalley and Eilenberg \cite{CE}.
For Hom-Lie algebra, the cohomology theory has been given by \cite{MS2, S, Y2}.
In this section we study representations of Bihom-Lie algebras
and give the corresponding coboundary operators. We can also
construct the semidirect product of Bihom-Lie algebras.
\begin{defi}
Let $(L, [\cdot,\cdot], \alpha, \beta)$ be a Bihom-Lie algebra.
A representation of $L$ is a $4$-tuple
$(M,\rho, \alpha_{M}, \beta_{M})$, where $M$ is a linear space,
$\alpha_{M},\beta_{M}: M\rightarrow M$ are two commuting linear maps
and $\rho: L\rightarrow End(M)$ is a linear map such that, for all $x, y\in L$, we have
\begin{gather}
\rho\big(\alpha(x)\big)\circ\alpha_{M}=\alpha_{M}\circ\rho(x),
\end{gather}
\begin{gather}
\rho\big(\beta(x)\big)\circ\beta_{M}=\beta_{M}\circ\rho(x),
\end{gather}
\begin{gather}
\rho([\beta(x), y])\circ\beta_{M}=\rho\big(\alpha\beta(x)\big)\circ\rho(y)
-\rho\big(\beta(y)\big)\circ\rho\big(\alpha(x)\big).
\end{gather}
\end{defi}

~~~~~~Let $( L,[-] ,\alpha ,\beta ) $ be a regular Bihom-Lie algebra.
The set of $k$-cochains on $L$ with values in $M$, which we denote
by $C^{k}(L; M)$, is the set of skew-symmetric $k$-linear maps
form $L\times\cdots\times L(k-times) $ to $M$:
$$C^{k}(L; M)\triangleq\{ f: \wedge^{k}L\rightarrow M  ~ is ~ a~ linear~ map\}.$$

~~~~~~A $k$-Bihom cochain on $L$ with values in $M$ is defined
to be a $k$-cochain $f\in C^{k}(L; M)$ such that it is
compatible with $\alpha, \beta$ and $\alpha_{M}, \beta_{M}$
in the sense that $\alpha_{M} \circ f=f\circ\alpha$,
$\beta_{M}\circ f=f\circ \beta,$ i.e.
$$\alpha_{M}\big(f(u_{1},\cdots,u_{k})\big)
=f\big(\alpha(u_{1}),\cdots,\alpha(u_{k})\big),$$
$$\beta_{M}\big(f(u_{1},\cdots,u_{k})\big)
=f\big(\beta(u_{1}),\cdots,\beta(u_{k})\big).$$
Denote by $C^{k}_{\substack{(\alpha,\alpha_{M})
\\( \beta, \beta_{M})}}(L; M)\triangleq\{f\in C^{k}(L; M)
\mid \alpha_{M}\circ f=f\circ\alpha, \beta_{M}\circ f=f\circ\beta\}.$

~~~~~~Define $d_{\rho}: C^{k}_{\substack{(\alpha,\alpha_{M})
\\( \beta, \beta_{M})}}(L; M)\rightarrow C^{k+1}(L; M)$ by setting
\begin{eqnarray*}
d_{\rho}f(u_{1},\cdots,u_{k+1})&=&\sum_{i=1}^{k+1}(-1)^{i}
\rho\big(\alpha\beta^{k-1}(u_{i})\big)\big(f(u_{1}, \cdots,
\widehat{u_{i}},\cdots,u_{k+1})\big)\\
&   &+\sum_{i<j}(-1)^{i+j+1}f\big([\alpha^{-1}\beta(u_{i}), u_{j}],
\beta(u_{1}),\cdots,\widehat{u_{i}},\cdots, \widehat{u_{j}}, \cdots, \beta(u_{k+1})\big).
\end{eqnarray*}

\begin{lemm}
With the above notations, for any $f\in C^{k}_{\substack{(\alpha,
\alpha_{M})\\( \beta, \beta_{M})}}(L; M)$, we have
$$(d_{\rho}f)\circ \alpha=\alpha_{M}\circ(d_{\rho}f),$$
$$ (d_{\rho}f)\circ \beta=\beta_{M}\circ(d_{\rho}f).$$
Thus we obtain a well-defined map
$$d_{\rho}: C^{k}_{\substack{(\alpha,\alpha_{M})\\( \beta, \beta_{M})}}(L; M)\rightarrow C^{k+1}_{\substack{(\alpha,\alpha_{M})\\( \beta, \beta_{M})}}(L; M).$$
\end{lemm}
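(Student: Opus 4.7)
The plan is to verify both intertwining identities by a direct term-by-term computation in the explicit definition of $d_{\rho}f$. Since the $\alpha$-statement and the $\beta$-statement are structurally parallel, I would carry out the $\alpha$-case in full and remark at the end that the $\beta$-case is obtained by replacing every $\alpha$-axiom with its $\beta$-analogue. The four tools I will repeatedly invoke are: (i) commutativity $\alpha\circ\beta=\beta\circ\alpha$; (ii) multiplicativity $\alpha([u,v])=[\alpha(u),\alpha(v)]$; (iii) the representation intertwining $\rho(\alpha(x))\circ\alpha_{M}=\alpha_{M}\circ\rho(x)$; and (iv) the cochain compatibility $\alpha_{M}\circ f=f\circ\alpha$ applied slotwise.

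To process the first sum of $(d_{\rho}f)(\alpha(u_{1}),\ldots,\alpha(u_{k+1}))$, the $i$-th summand is
\[
(-1)^{i}\,\rho\bigl(\alpha\beta^{k-1}(\alpha(u_{i}))\bigr)\bigl(f(\alpha(u_{1}),\ldots,\widehat{\alpha(u_{i})},\ldots,\alpha(u_{k+1}))\bigr).
\]
By (i) the argument of $\rho$ is $\alpha(\alpha\beta^{k-1}(u_{i}))$; by (iv) the argument fed to $\rho$'s output factors as $\alpha_{M}(f(u_{1},\ldots,\widehat{u_{i}},\ldots,u_{k+1}))$; and finally (iii) lets us pull $\alpha_{M}$ past $\rho$. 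Summing over $i$ yields $\alpha_{M}$ applied to the first sum in the unsubstituted $(d_{\rho}f)(u_{1},\ldots,u_{k+1})$.

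To process the second sum, the key manipulation is
\[
[\alpha^{-1}\beta(\alpha(u_{i})),\alpha(u_{j})]=[\beta(u_{i}),\alpha(u_{j})]=\alpha\bigl([\alpha^{-1}\beta(u_{i}),u_{j}]\bigr),
\]
where the first equality uses (i) (namely $\alpha^{-1}\beta\alpha=\beta=\alpha\alpha^{-1}\beta$) and the second uses (ii). Together with $\beta(\alpha(u_{l}))=\alpha(\beta(u_{l}))$, this shows that the full argument list of $f$ in the $(i,j)$-th term is obtained by applying $\alpha$ slotwise to
\[
\bigl([\alpha^{-1}\beta(u_{i}),u_{j}],\beta(u_{1}),\ldots,\widehat{u_{i}},\ldots,\widehat{u_{j}},\ldots,\beta(u_{k+1})\bigr).
\]
A single invocation of (iv) pulls a common $\alpha_{M}$ outside, so the second sum also equals $\alpha_{M}$ applied to the corresponding second sum of $(d_{\rho}f)(u_{1},\ldots,u_{k+1})$. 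Adding the two pieces proves $(d_{\rho}f)\circ\alpha=\alpha_{M}\circ(d_{\rho}f)$; the $\beta$-identity follows by the same template, using instead $\alpha^{-1}\beta\circ\beta=\beta\circ\alpha^{-1}\beta$, multiplicativity of $\beta$, and the intertwining $\rho(\beta(x))\circ\beta_{M}=\beta_{M}\circ\rho(x)$.

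The main obstacle is purely notational rather than conceptual: one must track carefully how the twisted factors $\alpha\beta^{k-1}$ and $\alpha^{-1}\beta$ sitting inside the coboundary interact with the substitution $u_{i}\mapsto\alpha(u_{i})$ (resp.\ $u_{i}\mapsto\beta(u_{i})$), so that every $\alpha$ (resp.\ $\beta$) produced by the substitution can be collected into a single outer $\alpha_{M}$ (resp.\ $\beta_{M}$). Once this bookkeeping is organized around the four tools above, both identities fall out, and the well-definedness of $d_{\rho}$ as a map into $C^{k+1}_{\substack{(\alpha,\alpha_{M})\\(\beta,\beta_{M})}}(L;M)$ is immediate.
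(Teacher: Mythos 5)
Your proposal is correct and follows essentially the same route as the paper: substitute $\alpha(u_i)$ (resp.\ $\beta(u_i)$) into the two sums defining $d_{\rho}f$, use $\alpha\circ\beta=\beta\circ\alpha$ and multiplicativity to rewrite the bracket term as $\alpha$ applied slotwise, and then pull out $\alpha_M$ via the cochain compatibility and the intertwining relation $\rho(\alpha(x))\circ\alpha_M=\alpha_M\circ\rho(x)$. The bookkeeping you describe matches the paper's computation line for line, so no further comment is needed.
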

\begin{proof}
For any $f\in C^{k}_{\substack{(\alpha,\alpha_{M})
\\( \beta, \beta_{M})}}(L; M)$, we have  $\alpha_{M} \circ f
=f\circ\alpha$, $\beta_{M}\circ f=f\circ \beta.$
More precisely, we have
\begin{eqnarray*}
& &d_{\rho}f\big(\alpha(u_{1}),\cdots,\alpha(u_{k+1})\big)\\
&=&\sum_{i=1}^{k+1}(-1)^{i}\rho\big(\alpha^{2}\beta^{k-1}(u_{i})
\big)\Big(f\big(\alpha(u_{1}),\cdots,\widehat{u_{i}},
\cdots,\alpha(u_{k+1})\big)\Big)\\
& &+\sum_{i<j}(-1)^{i+j+1}f\big([\alpha^{-1}\beta\alpha(u_{i}), \alpha(u_{j})], \beta\alpha(u_{1}),\cdots,\widehat{u_{i}},
\cdots,\widehat{u_{j}},\cdots,\beta\alpha(u_{k+1})\big)\\
&=&\sum_{i=1}^{k+1}(-1)^{i}\rho\big(\alpha^{2}
\beta^{k-1}(u_{i})\big)\circ\alpha_{M}\circ f(u_{1},
\cdots,\widehat{u_{i}},\cdots,u_{k+1})\\
& &+\sum_{i<j}(-1)^{i+j+1}f\big([\beta(u_{i}), \alpha(u_{j})],
\beta\alpha(u_{1}),\cdots,\widehat{u_{i}},\cdots,
\widehat{u_{j}},\cdots,\beta\alpha(u_{k+1})\big)\\
&=&\sum_{i=1}^{k+1}(-1)^{i}\alpha_{M}\circ\rho\big(\alpha\beta^{k-1}
(u_{i})\big)\big(f(u_{1},\cdots,\widehat{u_{i}},\cdots,u_{k+1})\big)\\
& &+\sum_{i<j}(-1)^{i+j+1}\alpha_{M}\circ f\big([\alpha^{-1}\beta(u_{i}), u_{j}], \beta(u_{1}),\cdots,\widehat{u_{i}},\cdots,
\widehat{u_{j}},\cdots,\beta(u_{k+1})\big)\\
&=&\alpha_{M}\circ d_{\rho}f(u_{1},\cdots,u_{k+1}).
\end{eqnarray*}
Similarly,
\begin{eqnarray*}
& &d_{\rho}f\big(\beta(u_{1}),\cdots,\beta(u_{k+1})\big)\\
&=&\sum_{i=1}^{k+1}(-1)^{i}\rho\big(\alpha\beta^{k}(u_{i})\big)
\Big(f\big(\beta(u_{1}),\cdots,\widehat{u_{i}},\cdots,\beta(u_{k+1})\big)\Big)\\
& &+\sum_{i<j}(-1)^{i+j+1}f\big([\alpha^{-1}\beta\beta(u_{i}), \beta(u_{j})], \beta^{2}(u_{1}),\cdots,\widehat{u_{i}},\cdots,\widehat{u_{j}},
\cdots,\beta^{2}(u_{k+1})\big)\\
&=&\sum_{i=1}^{k+1}(-1)^{i}\rho\big(\alpha\beta^{k}(u_{i})\big)\circ\beta_{M}\circ f(u_{1},\cdots,\widehat{u_{i}},\cdots,u_{k+1})\\
& &+\sum_{i<j}(-1)^{i+j+1}f\big([\alpha^{-1}\beta^{2}(u_{i}),\beta(u_{j})], \beta^{2}(u_{1}),\cdots,\widehat{u_{i}},\cdots,
\widehat{u_{j}},\cdots,\beta^{2}(u_{k+1})\big)\\
&=&\sum_{i=1}^{k+1}(-1)^{i}\beta_{M}\circ\rho\big(\alpha
\beta^{k-1}(u_{i})\big)\big(f(u_{1},\cdots,\widehat{u_{i}},\cdots,u_{k+1})\big)\\
& &+\sum_{i<j}(-1)^{i+j+1}\beta_{M}\circ f\big([\alpha^{-1}\beta(u_{i}), u_{j}], \beta(u_{1}),\cdots,\widehat{u_{i}},\cdots,
\widehat{u_{j}},\cdots,\beta(u_{k+1})\big)\\
&=&\beta_{M}\circ d_{\rho}f(u_{1},\cdots,u_{k+1}).
\end{eqnarray*}
The proof is completed.
\end{proof}
\begin{prop}
The map $d_{\rho}$ is a coboundary operator, i.e. $d_{\rho}^{2}=0.$
\end{prop}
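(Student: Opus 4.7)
The plan is to expand $d_{\rho}^{2}f(u_{1},\ldots,u_{k+2})$ directly from the definition and show the resulting sum vanishes term by term, mimicking the classical Chevalley--Eilenberg argument with the Bihom twists carried along. Since $d_{\rho}f$ has two pieces, a $\rho$-term and a bracket-term, applying $d_{\rho}$ once more produces four families of contributions: (A) $\rho$-$\rho$ terms, (B) $\rho$ acting on an inner bracket-piece, (C) bracket-piece containing an inner $\rho$-contribution, and (D) bracket-bracket terms. I would establish cancellation family by family.

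For family (A), pairing the $(i,j)$ and $(j,i)$ contributions and applying the representation identity
\begin{equation*}
\rho([\beta(x),y])\circ\beta_{M}=\rho(\alpha\beta(x))\circ\rho(y)-\rho(\beta(y))\circ\rho(\alpha(x))
\end{equation*}
converts the antisymmetric part into a $\rho$ of a bracket; this produced term is precisely what cancels the portion of family (C) in which the outer bracket-piece feeds $[\alpha^{-1}\beta(u_{i}), u_{j}]$ into the $\rho$-slot. The ``diagonal'' portions of (B) and (C), where the indices inside $\rho$ and inside the nested bracket coincide, cancel against each other using the two intertwining axioms $\rho(\alpha(x))\circ\alpha_{M}=\alpha_{M}\circ\rho(x)$ and $\rho(\beta(x))\circ\beta_{M}=\beta_{M}\circ\rho(x)$ together with the cochain compatibility $\alpha_{M}\circ f=f\circ\alpha$, $\beta_{M}\circ f=f\circ\beta$.

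Family (D) is handled using the Bihom-Jacobi identity (\ref{c}). For each unordered triple $\{i,j,k\}$, the outer $d_{\rho}$ produces three terms in which one of $u_{i},u_{j},u_{k}$ is eliminated while the remaining two form a nested bracket of the general shape $[\alpha^{-1}\beta[\alpha^{-1}\beta(u_{\bullet}),u_{\bullet}],u_{\bullet}]$. I would rewrite these via (\ref{d}) so that their cyclic sum matches the pattern $[\beta^{2}(a),[\beta(b),\alpha(c)]]+\text{cyclic}$ appearing in (\ref{c}), at which point the triple vanishes; the compatibility of $f$ with $\alpha,\beta$ lets the uniform powers of $\alpha^{-1}\beta$ be pulled out of the outer $f$ so the invocation of Bihom-Jacobi becomes legitimate.

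The main obstacle I expect is precisely the bookkeeping in family (D). The Bihom-Jacobi identity comes with a very specific distribution of $\beta^{2},\beta,\alpha$ over the three slots of the cyclic sum, whereas the nested bracket produced by $d_{\rho}^{2}$ contains several factors of $\alpha^{-1}\beta$ and acts on arguments that are themselves already $\beta$-twisted by the inner differential. Lining up these twistings, both inside the nested bracket and in the passive arguments of $f$, so that the cyclic rearrangement really matches (\ref{c}) is where the proof requires the most care. Once that is done, the cancellations in families (A), (B), (C) are formal consequences of the representation and compatibility axioms.
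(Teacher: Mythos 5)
Your overall strategy coincides with the paper's: expand $d_{\rho}^{2}f$ directly, sort the contributions into $\rho$--$\rho$ terms, mixed $\rho$--bracket terms, and bracket--bracket terms, cancel the antisymmetrized $\rho$--$\rho$ part against the $\rho$-of-a-bracket term coming from the inner differential via the axiom $\rho([\beta(x),y])\circ\beta_{M}=\rho(\alpha\beta(x))\circ\rho(y)-\rho(\beta(y))\circ\rho(\alpha(x))$ together with $\beta_{M}\circ f=f\circ\beta$, cancel the remaining mixed terms in pairs by sign, and kill the nested double brackets with the Bihom--Jacobi identity rewritten in the twisted form $\big[[\alpha^{-2}\beta^{2}(a''),\alpha^{-1}\beta(a')],\beta(a)\big]+\big[[\alpha^{-2}\beta^{2}(a'),\alpha^{-1}\beta(a)],\beta(a'')\big]+\big[[\alpha^{-2}\beta^{2}(a),\alpha^{-1}\beta(a'')],\beta(a')\big]=0$. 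You correctly identify the twist bookkeeping in this last step as the delicate point, and it is exactly what the paper resolves by deriving that equivalent form of (\ref{c}).

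There is, however, one genuine omission in your family (D). When the outer $d_{\rho}$ acts on the bracket part of the inner $d_{\rho}f$, it produces not only the three-index nested brackets you describe, but also four-index terms of the shape
\begin{equation*}
f\big([\alpha^{-1}\beta\beta(u_{p}),\beta(u_{q})],\,\beta[\alpha^{-1}\beta(u_{i}),u_{j}],\,\beta^{2}(u_{1}),\dots\big)
\end{equation*}
with $\{p,q\}$ disjoint from $\{i,j\}$, in which the new bracket is formed from two passive (already $\beta$-twisted) arguments. These do not vanish by Bihom--Jacobi; they cancel in pairs, the $(i,j)$-then-$(p,q)$ term against the $(p,q)$-then-$(i,j)$ term, and this cancellation is not purely formal in the Bihom setting: it requires the multiplicativity of $\beta$, i.e.\ $[\alpha^{-1}\beta\beta(u_{p}),\beta(u_{q})]=\beta[\alpha^{-1}\beta(u_{p}),u_{q}]$, so that the two terms have literally the same arguments and opposite signs. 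The paper isolates this family explicitly and records that identity as the reason it sums to zero; your plan as written accounts for every other family but would leave these terms unmatched. Your description of the cancellation of the remaining mixed $\rho$--bracket terms is also slightly loose (they cancel because each index configuration arises once from each order of operations with opposite signs, not because of a ``diagonal'' coincidence of indices), but that is a matter of phrasing rather than substance.
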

\begin{proof}
For any $f\in C^{k}_{\substack{(\alpha,\alpha_{M})
\\( \beta, \beta_{M})}}(L; M),$  we have
$d_{\rho}^{2}f\in C^{k+2}_{\substack{(\alpha,\alpha_{M})
\\( \beta, \beta_{M})}}(L; M).$
By straightforward computations, we have
\begin{eqnarray*}
& &d_{\rho}^{2}f(u_{1},\cdots,u_{k+2})\\
&=&\sum_{i=1}^{k+2}(-1)^{i}\rho\big(\alpha\beta^{k}(u_{i})\big)
\big(d_{\rho}f(u_{1},\cdots,\widehat{u_{i}},\cdots,u_{k+2})\big)\\
& &+\sum_{i<j}(-1)^{i+j+1}d_{\rho}f\big([\alpha^{-1}\beta(u_{i}), u_{j}],
\beta(u_{1}),\cdots,\widehat{u_{i}},\cdots,
\widehat{u_{j}},\cdots,\beta(u_{k+2})\big).
\end{eqnarray*}
It is not hard to deduce that
\begin{eqnarray}
& &\rho\big(\alpha\beta^{k}(u_{i})\big)\big(d_{\rho}f(u_{1},
\cdots,\widehat{u_{i}},\cdots,u_{k+2})\big)\nonumber\\
&=&\sum_{p=1}^{i-1}(-1)^{p}\rho\big(\alpha\beta^{k}(u_{i})\big)
\circ\rho\big(\alpha\beta^{k-1}(u_{p})\big)(u_{1},\cdots,
\widehat{u_{p}},\cdots,\widehat{u_{i}},\cdots,u_{k+2})\label{aa}\\
& &+\sum_{p=i+1}^{k+2}(-1)^{p-1}\rho\big(\alpha\beta^{k}(u_{i})\big)
\circ\rho\big(\alpha\beta^{k-1}(u_{p})\big)\nonumber\\
& &\quad\times(u_{1},\cdots,\widehat{u_{i}},\cdots,\widehat{u_{p}},\cdots,u_{k+2})\label{bb}\\
& &+\sum_{i<p<q}(-1)^{p+q+1}\rho\big(\alpha\beta^{k}(u_{i})\big)\nonumber\\
& &\quad\times\Big(f\big([\alpha^{-1}\beta(u_{p}),u_{q}],\beta(u_{1}),\cdots,
\widehat{u_{i}},\cdots,\widehat{u_{p}},\cdots,\widehat{u_{q}},
\cdots,\beta(u_{k+2})\big)\Big)\nonumber\\
& &+\sum_{p<q<i}(-1)^{p+q+1}\rho\big(\alpha\beta^{k}(u_{i})\big)\nonumber\\
& &\quad\times\Big(f\big([\alpha^{-1}\beta(u_{p}),u_{q}],\beta(u_{1}),\cdots,
\widehat{u_{p}},\cdots,\widehat{u_{q}},\cdots,\widehat{u_{i}},
\cdots,\beta(u_{k+2})\big)\Big)\nonumber\\
& &+\sum_{p<i<q}(-1)^{p+q}\rho\big(\alpha\beta^{k}(u_{i})\big)\nonumber\\
& &\quad\times\Big(f\big([\alpha^{-1}\beta(u_{p}),u_{q}],\beta(u_{1}),
\cdots,\widehat{u_{p}},\cdots,\widehat{u_{i}},\cdots,\widehat{u_{q}},
\cdots,\beta(u_{k+2})\big)\Big),\nonumber
\end{eqnarray}
and
\begin{eqnarray}
& &d_{\rho}f\big([\alpha^{-1}\beta(u_{i}), u_{j}], \beta(u_{1}),
\cdots,\widehat{u_{i}},\cdots,\widehat{u_{j}},\cdots,
\beta(u_{k+2})\big)\nonumber\\
&=&-\rho\big(\alpha\beta^{k-1}[\alpha^{-1}\beta(u_{i}),u_{j}]\big)
\Big(f\big(\beta(u_{1}),\cdots,\widehat{u_{i}},\cdots,\widehat{u_{j}},
\cdots,\beta(u_{k+2})\big)\Big)\label{ff}\\
& &+\sum_{p=1}^{i-1}(-1)^{p+1}\rho\big(\alpha\beta^{k}(u_{p})\big)
\Big(f\big([\alpha^{-1}\beta(u_{i}),u_{j}],\beta(u_{1}),\cdots,
\widehat{u_{p}},\cdots,\widehat{u_{i}},\cdots,\widehat{u_{j}},
\cdots,\beta(u_{k+2})\big)\Big)\nonumber\\
& &+\sum_{p=i+1}^{j-1}(-1)^{p}\rho\big(\alpha\beta^{k}(u_{p})\big)
\Big(f\big([\alpha^{-1}\beta(u_{i}),u_{j}],\beta(u_{1}),\cdots,
\widehat{u_{i}},\cdots,\widehat{u_{p}},\cdots,\widehat{u_{j}},
\cdots,\beta(u_{k+2})\big)\Big)\nonumber\\
& &+\sum_{j+1}^{k+2}(-1)^{p-1}\rho\big(\alpha\beta^{k}(u_{p})\big)
\Big(f\big([\alpha^{-1}\beta(u_{i}),u_{j}],\beta(u_{1}),\cdots,
\widehat{u_{i}},\cdots,\widehat{u_{j}},\cdots,\widehat{u_{p}},
\cdots,\beta(u_{k+2})\big)\Big)\nonumber\\
& &+\sum_{p=1}^{i-1}(-1)^{p+1}f\big(\big[[\alpha^{-2}\beta^{2}(u_{i}),
\alpha^{-1}\beta(u_{j})],\beta(u_{p})\big],\beta^{2}(u_{1}),\cdots,
\widehat{u_{p}},\cdots,\widehat{u_{i}},\cdots,\widehat{u_{j}},\cdots,
\beta^{2}(u_{k+2})\big)\nonumber\\
\label{j}\\
& &+\sum_{p=i+1}^{j-1}(-1)^{p}f\big(\big[[\alpha^{-2}\beta^{2}(u_{i}),
\alpha^{-1}\beta(u_{j})],\beta(u_{p})\big],\beta^{2}(u_{1}),\cdots,
\widehat{u_{i}},\cdots,\widehat{u_{p}},\cdots,\widehat{u_{j}},
\cdots,\beta^{2}(u_{k+2})\big)\nonumber\\
\label{k}\\
& &+\sum_{p=j+1}^{k+2}(-1)^{p-1}f\big(\big[[\alpha^{-2}\beta^{2}(u_{i}),
\alpha^{-1}\beta(u_{j})],\beta(u_{p})\big],\beta^{2}(u_{1}),\cdots,
\widehat{u_{i}},\cdots,\widehat{u_{j}},\cdots,\widehat{u_{p}},\cdots,
\beta^{2}(u_{k+2})\big)\nonumber\\
\label{l}\\
& &+\sum_{p,q}(\pm)f([\alpha^{-1}\beta\beta(u_{p}),\beta(u_{q})], \beta[\alpha^{-1}\beta(u_{i}),u_{j}],\beta^{2}(u_{1}),\cdots,
\widehat{u_{i,j,p,q}},\cdots,\beta^{2}(u_{k+2})).\label{m}
\end{eqnarray}
In (\ref{m}) , $\widehat{u_{i,j,p,q}}$ means that we omit the
items $u_{i}, u_{j}, u_{p}, u_{q}.$ By the fact that
$[\alpha^{-1}\beta\beta(u_{p}),\beta(u_{q})]
=\beta[\alpha^{-1}\beta(u_{p}),u_{q}],$ we get
$$\sum_{i<j}(-1)^{i+j+1}(\ref{m})=0.$$
By the Bihom-Jacobi identity,
\begin{eqnarray*}
& &\big[ \beta ^{2}(a) , [ \beta  ( a^{\prime } ), \alpha  ( a^{\prime \prime } )  ] \big] \\
&=&\big[ \beta ^{2}(a) , \alpha[ \alpha^{-1}\beta( a^{\prime} ),a^{\prime \prime }  ] \big]\\
&=&-\big[ \beta  [\alpha^{-1}\beta( a^{\prime} ),a^{\prime \prime }],\alpha\beta(a)\big] \\
&=&-\big[[\alpha^{-1}\beta^{2}( a^{\prime} ),\beta(a^{\prime \prime })],\alpha\beta(a)\big] \\
&=&\big[[\alpha^{-1}\beta^{2}(a^{\prime \prime } ),\beta(a^{\prime })],\alpha\beta(a)\big],
\end{eqnarray*}
thus
$$\big[ \beta ^{2}(a) , [ \beta  ( a^{\prime } )
,\alpha  ( a^{\prime \prime } )  ] \big] +\big[ \beta
^{2} ( a^{\prime } ) , [ \beta  ( a^{\prime \prime } )
,\alpha (a)  ] \big] +\big[ \beta ^{2} ( a^{\prime
\prime } ) , [ \beta (a) ,\alpha  ( a^{\prime
} ) ] \big] =0$$   is  equivalent to
$$\big[[\alpha^{-2}\beta^{2}(a^{\prime \prime }),\alpha^{-1}\beta(a^{\prime })], \beta(a)\big]+\big[[\alpha^{-2}\beta^{2}(a^{\prime }),
\alpha^{-1}\beta(a)], \beta(a^{\prime \prime })\big]
+\big[[\alpha^{-2}\beta^{2}(a),\alpha^{-1}
\beta(a^{\prime \prime })], \beta(a^{\prime })\big]=0.$$
We get that $$\sum_{i<j}(-1)^{i+j+1}\big((\ref{j})
+(\ref{k})+(\ref{l})\big)=0.$$
At last, we have that
\begin{eqnarray*}
(\ref{ff})&=&-\rho\big([\beta^{k}(u_{i}),\alpha\beta^{k-1}(u_{j})]\big)
\Big(f\big(\beta(u_{1}),\cdots,\widehat{u_{i}},\cdots,
\widehat{u_{j}},\cdots,\beta(u_{k+2})\big)\Big)\\
&=& -\rho\big([\beta^{k}(u_{i}),\alpha\beta^{k-1}(u_{j})]\big)\circ\beta_{M}\circ f(u_{1},\cdots,\widehat{u_{i}},\cdots,\widehat{u_{j}},\cdots,u_{k+2})\\
&=&-\big[\rho\big(\alpha\beta^{k}(u_{i})\big)\circ\rho
\big(\alpha\beta^{k-1}(u_{j})\big)-\rho\big(\alpha
\beta^{k}(u_{j})\big)\circ\rho\big(\alpha\beta^{k-1}(u_{i})\big)\big]\\
& &\times f(u_{1},\cdots,\widehat{u_{i}},\cdots,\widehat{u_{j}},\cdots,u_{k+2}).
\end{eqnarray*}
Thus we have$$ \sum_{i=1}^{k+2}(-1)^{i}\big((\ref{aa})
+(\ref{bb})\big)+\sum_{i<j}(-1)^{i+j+1}(\ref{ff})=0.$$
The sum of the other six items is zero. Therefore, we
have $d_{\rho}^{2}=0.$ The proof is completed.
\end{proof}

~~~~~~Associated to the representation $\rho$, we obtain
the complex $\big(C^{k}_{\substack{(\alpha,\alpha_{M})
\\( \beta, \beta_{M})}}(L; M), d_{\rho}\big).$ Denote
the set of closed $k$-Bihom-cochains by $Z^{k}_{\alpha,\beta}(L;\rho)$
and the set of exact $k$-Bihom-cochains by $B^{k}_{\alpha,\beta}(L;\rho)$.
Denote the corresponding cohomology by
$$H^{k}_{\alpha,\beta}(L;\rho)=Z^{k}_{\alpha,\beta}
(L;\rho)/B^{k}_{\alpha,\beta}(L;\rho),$$
where
$$
Z^{k}_{\alpha,\beta}(L;\rho)=\{f\in C^{k}_{\substack{(\alpha,\alpha_{M})
\\( \beta, \beta_{M})}}(L; M):d_{\rho}f=0 \},
$$
$$
B^{k}_{\alpha,\beta}(L;\rho)=\{d_{\rho}f^{\prime}:f^{\prime}
\in C^{k-1}_{\substack{(\alpha,\alpha_{M})\\( \beta, \beta_{M})}}(L; M) \}.
$$

~~~~~~In the case of Lie algebras, we can form semidirect products
when given representations. Similarly, we have
\begin{prop}
Let $(L, [\cdot,\cdot], \alpha, \beta)$ be a Bihom-Lie algebra and
$(M, \rho,\alpha_{M},\beta_{M})$ a representation of $L$.
Assume that the maps $\alpha$ and $\beta_{M}$ are bijective.
Then $L\ltimes M:=(L\oplus M, [\cdot,\cdot], \alpha\oplus\alpha_{M},
\beta\oplus\beta_{M})$ is a Bihom-Lie algebra , where
$\alpha\oplus\alpha_{M},\beta\oplus\beta_{M}: L\oplus M\rightarrow L\oplus M$ are defined by $(\alpha\oplus\alpha_{M})(x, a)=\big(\alpha(x),
\alpha_{M}(a)\big)$ and $(\beta\oplus\beta_{M})(x, a)
=\big(\beta(x),\beta_{M}(a)\big),$ and, for all $x, y\in L$
and  $a,b\in M$, the bracket $[\cdot,\cdot]$ is
defined by
$$
[(x,a),(y,b)]=\Big([x,y],
\quad\rho(x)(b)-\rho\big(\alpha^{-1}\beta(y)\big)
\big(\alpha_{M}\beta_{M}^{-1}(a)\big)\Big).
$$
\end{prop}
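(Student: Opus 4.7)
The plan is to verify the three defining axioms of a Bihom-Lie algebra for $L\ltimes M$ componentwise in $L\oplus M$, exploiting the fact that the $L$-component of the bracket is just the original Bihom-Lie bracket on $L$. Commutativity $(\alpha\oplus\alpha_M)\circ(\beta\oplus\beta_M)=(\beta\oplus\beta_M)\circ(\alpha\oplus\alpha_M)$ is immediate from $\alpha\beta=\beta\alpha$ in $L$ together with the commuting hypothesis $\alpha_M\beta_M=\beta_M\alpha_M$ from the definition of a representation.

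For skew-symmetry, I would expand $[(\beta(x),\beta_M(a)),(\alpha(y),\alpha_M(b))]$ using the defining formula. In the $L$-slot we get $[\beta(x),\alpha(y)]$, which is skew by (\ref{b}). In the $M$-slot, using that $\alpha$ and $\beta$ commute, the factor $\alpha^{-1}\beta(\alpha(y))$ collapses to $\beta(y)$ and $\alpha_M\beta_M^{-1}(\beta_M(a))$ collapses to $\alpha_M(a)$, giving $\rho(\beta(x))\alpha_M(b)-\rho(\beta(y))\alpha_M(a)$, which is manifestly skew in $(x,a)\leftrightarrow(y,b)$.

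The core of the argument is the Bihom-Jacobi identity. I would fix $X_i=(x_i,a_i)$ and compute the inner bracket
\[
[\beta_{L\oplus M}(X_2),\alpha_{L\oplus M}(X_3)]=\big([\beta(x_2),\alpha(x_3)],\,\rho(\beta(x_2))\alpha_M(a_3)-\rho(\beta(x_3))\alpha_M(a_2)\big),
\]
then pair it with $\beta_{L\oplus M}^{2}(X_1)$. The $L$-component of the cyclic sum vanishes by the Bihom-Jacobi identity (\ref{c}) for $L$. For the $M$-component, the delicate term is $\rho\bigl(\alpha^{-1}\beta[\beta(x_2),\alpha(x_3)]\bigr)\bigl(\alpha_M\beta_M^{-1}(\beta_M^{2}(a_1))\bigr)$. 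Since $\alpha$ and $\beta$ are algebra morphisms with $\alpha$ invertible, $\alpha^{-1}\beta$ distributes across the bracket, and $\alpha^{-1}\beta[\beta(x_2),\alpha(x_3)]=[\alpha^{-1}\beta^{2}(x_2),\beta(x_3)]$. The key move is to apply the third representation axiom with $x\mapsto\alpha^{-1}\beta(x_2)$ and $y\mapsto\beta(x_3)$, rewriting
\[
\rho\bigl([\alpha^{-1}\beta^{2}(x_2),\beta(x_3)]\bigr)\circ\beta_M=\rho(\beta^{2}(x_2))\rho(\beta(x_3))-\rho(\beta^{2}(x_3))\rho(\beta(x_2)),
\]
and then feeding in $\alpha_M(a_1)=\beta_M^{-1}(\alpha_M\beta_M(a_1))$.

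After this rewrite, each cyclic term in the $M$-slot is a sum of four composites of the form $\rho(\beta^{2}(x_i))\rho(\beta(x_j))\alpha_M(a_k)$. Summing over the three cyclic permutations produces twelve terms; I expect every one of them to appear exactly once with a plus sign and once with a minus sign, so everything cancels in pairs. The main obstacle is purely bookkeeping: keeping the signs straight and matching the $\alpha^{\pm1},\beta,\beta_M^{\pm1}$ decorations so that the representation axioms apply cleanly. The hypotheses that $\alpha$ and $\beta_M$ are bijective are used precisely to make the expressions $\alpha^{-1}\beta(y)$ and $\alpha_M\beta_M^{-1}(a)$ in the bracket, as well as the rewriting via $\beta_M^{-1}$ above, well-defined.
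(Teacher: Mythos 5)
Your verification is correct, and in fact it supplies more than the paper does: the paper dismisses this proposition with a single sentence saying the proof is straightforward from the definitions, so there is no written argument to compare against. Your treatment of the $M$-component of the cyclic sum is the right one and it does close up: after rewriting $\rho\bigl([\alpha^{-1}\beta^{2}(x_2),\beta(x_3)]\bigr)\bigl(\alpha_M\beta_M(a_1)\bigr)$ via the third representation axiom (with $x\mapsto\alpha^{-1}\beta(x_2)$, $y\mapsto\beta(x_3)$, and the argument written as $\beta_M(\alpha_M(a_1))$, which uses that $\alpha_M$ and $\beta_M$ commute), each cyclic term contributes four composites of the form $\pm\,\rho(\beta^{2}(x_i))\rho(\beta(x_j))\alpha_M(a_k)$, and the resulting twelve terms cancel in pairs exactly as you predict; the $L$-component vanishes by the Bihom-Jacobi identity of $L$, and your skew-symmetry computation reducing the $M$-slot to $\rho(\beta(x))\alpha_M(b)-\rho(\beta(y))\alpha_M(a)$ is also correct. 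One point you should make explicit rather than assume: the step $\alpha^{-1}\beta[\beta(x_2),\alpha(x_3)]=[\alpha^{-1}\beta^{2}(x_2),\beta(x_3)]$ requires $\alpha$ and $\beta$ to be algebra morphisms, which is not among the literal hypotheses of the proposition (it asks only for a Bihom-Lie algebra with $\alpha$ and $\beta_M$ bijective); it is, however, the standing convention of this section of the paper that $L$ is regular, hence multiplicative, so you should invoke that standing assumption explicitly, since without it the representation axiom cannot be brought to bear on the outer $\rho$ term.
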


~~~~~~The proof is straightforward from the definition of the Bihom-Lie
algebra, we are no longer a detailed proof.

\section{The Trivial Representations of Bihom-Lie Algebras}
In this section, we study the trivial representation of regular
Bihom-Lie algebras. Now let $M=\R$, then we have $End(M)=\R$.
Any $\alpha_{M},\beta_{M}\in End(M)$ is just a real number,
which we denote by $r_{1},r_{2}$ respectively. Let $\rho:
L\rightarrow End(M)=\R$ be the zero map. Obviously, $\rho$
is a representation of the regular Bihom-Lie algebra
$(L, [\cdot,\cdot], \alpha, \beta)$ with respect to
any $r_{1},r_{2}\in \R.$ We will always assume that
$r_{1}=r_{2}=1$.  We call this representation
the trivial representation of the regular
Bihom-Lie algebra $(L, [\cdot,\cdot], \alpha, \beta).$

~~~~~~Associated to the trivial representation,
the set of $k$-cochains on $L$, which we denote by $C^{k}(L)$,
is the set of skew-symmetric $k$-linear maps from $L\times\cdots\times L$ (k-times) to $\R$, namely,  $$C^{k}(L)\triangleq\{f: \wedge^{k}L\rightarrow\R
~is ~a ~linear~map\}. $$ i.e.$C^{k}(L)=\wedge^{k}L^{\ast}.$
The set of $k$-Bihom-cochains is given by $$C_{\alpha,\beta}^{k}(L)
=\{f\in \wedge^{k}L^{\ast}\mid f\circ\alpha=f, f\circ\beta=f\}.$$
The corresponding coboundary operator $d_{T}:C_{\alpha,
\beta}^{k}(L)\rightarrow C_{\alpha,\beta}^{k+1}(L)$  is given by
\begin{eqnarray*}
& &d_{T}f(u_{1},\cdots,u_{k+1})\\
&=&\sum_{i<j}(-1)^{i+j+1}f\big( [\alpha^{-1}\beta(u_{i}),u_{j}],\beta(u_{1}),\cdots,\widehat{u_{i}},
\cdots,\widehat{u_{j}},\cdots,\beta(u_{k+1})\big).
\end{eqnarray*}
Denote by $Z^{k}_{\alpha,\beta}(L)$ and $B^{k}_{\alpha,\beta}(L)$
the corresponding closed $k$-Bihom-cochains and exact $k$-Bihom-cochains
respectively. Denote the resulting cohomology by $H^{k}_{\alpha,\beta}(L)$ . Namely,
$$Z^{k}_{\alpha,\beta}(L)=\{f\in C_{\alpha,\beta}^{k}(L): d_{T}f=0 \},$$
$$B^{k}_{\alpha,\beta}(L)=\{d_{T}f^{\prime}: f^{\prime}\in C_{\alpha,\beta}^{k-1}(L) \},$$
$$H^{k}_{\alpha,\beta}(L)=Z^{k}_{\alpha,\beta}(L)/B^{k}_{\alpha,\beta}(L).$$
\begin{prop}
With the above notations, associated to the trivial
representation of the regular Bihom-Lie algebra $(L, [\cdot,\cdot], \alpha, \beta)$ , we have
\begin{eqnarray*}
& &H^{0}_{\alpha,\beta}(L)=\R,\\
& &H^{1}_{\alpha,\beta}(L)=\{f\in C_{\alpha,\beta}^{1}(L)\mid
f\big([\beta(u),\alpha( v)]\big)=0, \forall u,v\in L\}.
\end{eqnarray*}
\end{prop}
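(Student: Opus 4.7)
The plan is to compute the two cohomology groups directly from the definitions of $C^k_{\alpha,\beta}(L)$, $d_T$, $Z^k_{\alpha,\beta}(L)$ and $B^k_{\alpha,\beta}(L)$ given just above the statement. Both computations rely on the same two simple observations, so I would state these up front: first, $d_T$ applied to a $0$-cochain has no $(i,j)$ with $i<j$ in its defining sum, so $d_T f' = 0$ for every $f' \in C^0_{\alpha,\beta}(L)$; consequently $B^1_{\alpha,\beta}(L) = 0$ and also $Z^0_{\alpha,\beta}(L) = C^0_{\alpha,\beta}(L)$. Second, since $\alpha$ and $\beta$ are morphisms with respect to the bracket and $f \circ \alpha = f$, $f \circ \beta = f$ for any $f \in C^1_{\alpha,\beta}(L)$, we can freely move $\alpha^{\pm 1}$ and $\beta^{\pm 1}$ through both arguments of $f([\cdot, \cdot])$.

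For $H^0_{\alpha,\beta}(L)$, I would note that a $0$-cochain is just a constant in $\mathbb{R}$ and that the compatibility conditions $\alpha_M \circ f = f \circ \alpha$ and $\beta_M \circ f = f \circ \beta$ are automatic (both sides being the same constant, since here $\alpha_M = \beta_M = 1$). Thus $C^0_{\alpha,\beta}(L) = \mathbb{R}$, and together with $Z^0_{\alpha,\beta}(L) = C^0_{\alpha,\beta}(L)$ and $B^0_{\alpha,\beta}(L) = 0$ (there are no $(-1)$-cochains), this yields $H^0_{\alpha,\beta}(L) = \mathbb{R}$.

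For $H^1_{\alpha,\beta}(L)$, I would specialize the coboundary formula to $k=1$: only the single term with $(i,j)=(1,2)$ survives, giving
\[
d_T f(u_1, u_2) = f\bigl([\alpha^{-1}\beta(u_1), u_2]\bigr).
\]
Using $\alpha([a,b]) = [\alpha(a), \alpha(b)]$, I rewrite $[\alpha^{-1}\beta(u_1), u_2] = \alpha^{-1}\bigl([\beta(u_1), \alpha(u_2)]\bigr)$, and then use $f \circ \alpha^{-1} = f$ (a consequence of $f \circ \alpha = f$ together with the bijectivity of $\alpha$) to conclude that $d_T f(u_1,u_2) = f([\beta(u_1), \alpha(u_2)])$. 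Thus $Z^1_{\alpha,\beta}(L)$ is exactly the set described in the statement, and combined with $B^1_{\alpha,\beta}(L) = 0$ this gives the desired description of $H^1_{\alpha,\beta}(L)$.

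There is no real obstacle here; the only mild subtlety is recognising that the condition $f([\alpha^{-1}\beta(u_1), u_2])=0$ coming naively out of the coboundary formula is equivalent to the cleaner symmetric form $f([\beta(u), \alpha(v)]) = 0$, which follows at once from the $\alpha$-invariance of $f$ and the multiplicativity of $\alpha$ on the bracket.
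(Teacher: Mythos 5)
Your proposal is correct and follows essentially the same route as the paper: both compute $d_T$ on $0$-cochains to get $H^0_{\alpha,\beta}(L)=\R$ and $B^1_{\alpha,\beta}(L)=0$, and both rewrite $d_Tf(u,v)=f([\alpha^{-1}\beta(u),v])=f\circ\alpha([\alpha^{-1}\beta(u),v])=f([\beta(u),\alpha(v)])$ using the multiplicativity of $\alpha$ and the invariance $f\circ\alpha=f$. Your write-up is in fact slightly more explicit than the paper's about why there are no exact $1$-Bihom-cochains.
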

\begin{proof}
Obviously, any $s\in \R$ is a 0-Bihom-cochain. By the definition of
coboundary operator $d_{T}$, we have $d_{T}s=0$.
Thus we have $H^{0}_{\alpha,\beta}(L)=\R.$
For any $f\in C_{\alpha,\beta}^{1}(L)$,
we have
$$d_{T}f(u,v)=f\big([\alpha^{-1}\beta(u), v]\big)=f\circ\alpha\big([\alpha^{-1}\beta(u), v]\big)=f\big([\beta(u),\alpha(v)]\big).$$
Therefore, $f$ is closed if and only if
$f\big([\beta(u),\alpha(v)]\big)=0 ,~ \forall u,v\in L$.
The conclusion follows from the fact that there
is no exact 1-Bihom-cochain. i.e.$B^{1}_{\alpha,\beta}(L)=0.$
 Thus we have $H^{1}_{\alpha,\beta}(L)=\{f\in C_{\alpha,
 \beta}^{1}(L)\mid f\big([\beta(u),\alpha( v)]\big)=0, \forall u,v\in L\}$.
\end{proof}

~~~~~~In the following we consider central extensions
of the multiplicative  Bihom-Lie algebra $(L, [\cdot,\cdot],
\alpha, \beta).$  Obviously, $(\R, 0, 1, 1)$
is a multiplicative Bihom-Lie algebra with the trivial
bracket and the identity morphism. Let $\theta\in
C_{\alpha,\beta}^{2}(L)$, we have $\theta\circ\alpha
=\theta$ and  $\theta\circ\beta=\theta$. We consider
the direct sum $\mathfrak{g}=L\oplus\R$ with the following bracket
$$[(u,s),(v,t)]_{\theta}=\big([u,v],\theta(\alpha\beta^{-1}(u),v)\big),
\quad \forall u,v\in L, s,t\in \R.$$
Define $\alpha_{\mathfrak{g}},\beta_{\mathfrak{g}}:
\mathfrak{g}\rightarrow\mathfrak{g}$ by
$\alpha_{\mathfrak{g}}(u,s)=\big(\alpha(u), s\big),
\beta_{\mathfrak{g}}(u,s)=\big(\beta(u), s\big).$
\begin{theo}
With the above notations, $(\mathfrak{g}, [\cdot,\cdot]_{\theta},
\alpha_{\mathfrak{g}},\beta_{\mathfrak{g}})$
is a multiplicative Bihom-Lie algebra if and only if
$\theta\in C_{\alpha,\beta}^{2}(L)$ is a
$2$-cocycle associated to the trivial representation, i.e.
$$d_{T}\theta=0.$$
\end{theo}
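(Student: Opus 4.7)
My strategy is that every defining axiom of a multiplicative Bihom-Lie algebra on $\mathfrak{g}=L\oplus\R$ splits cleanly into two components: an $L$-component, which is automatic because $(L,[\cdot,\cdot],\alpha,\beta)$ is already a multiplicative Bihom-Lie algebra, and an $\R$-component, which is governed entirely by $\theta$. I will dispatch the easier axioms first, and then show that the $\R$-component of the Bihom-Jacobi identity is exactly $-d_T\theta(u_1,u_2,u_3)$, so that both directions of the ``if and only if'' fall out at once.

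First I would verify $\alpha_{\mathfrak{g}}\circ\beta_{\mathfrak{g}}=\beta_{\mathfrak{g}}\circ\alpha_{\mathfrak{g}}$, which reduces at once to $\alpha\circ\beta=\beta\circ\alpha$ in $L$. Skew-symmetry of $[\cdot,\cdot]_\theta$ decomposes into the Bihom skew-symmetry on $L$ plus the identity $\theta(\alpha(u),\alpha(v))=-\theta(\alpha(v),\alpha(u))$, which holds since $\theta\in\wedge^2L^*$. Multiplicativity of $\alpha_{\mathfrak{g}}$ and $\beta_{\mathfrak{g}}$ unpacks similarly: its $L$-component uses multiplicativity of $\alpha$ and $\beta$ on $L$, while its scalar component uses precisely $\theta\circ\alpha=\theta$ and $\theta\circ\beta=\theta$, which are the defining conditions for $\theta\in C^2_{\alpha,\beta}(L)$. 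None of these auxiliary axioms imposes any extra constraint on $\theta$ beyond being a $2$-Bihom-cochain.

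The real content lies in the Bihom-Jacobi identity. Evaluated on $x=(u_1,s_1)$, $y=(u_2,s_2)$, $z=(u_3,s_3)$, its $L$-component is just the Bihom-Jacobi identity of $L$ applied to $u_1,u_2,u_3$, so it vanishes automatically. Unwinding the definition of $[\cdot,\cdot]_\theta$, the $\R$-component equals the cyclic sum
$$S=\theta(\alpha\beta(u_1),[\beta(u_2),\alpha(u_3)])+\theta(\alpha\beta(u_2),[\beta(u_3),\alpha(u_1)])+\theta(\alpha\beta(u_3),[\beta(u_1),\alpha(u_2)]).$$
In each summand I would rewrite $[\beta(u_j),\alpha(u_k)]=\alpha[\alpha^{-1}\beta(u_j),u_k]$ using multiplicativity of $\alpha$, apply $\theta\circ\alpha=\theta$ to absorb the outer $\alpha$, and use the skew-symmetry of $\theta$; this converts the $i$-th term (indices modulo $3$) into $-\theta([\alpha^{-1}\beta(u_{i+1}),u_{i+2}],\beta(u_i))$. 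A single application of Eq.~(\ref{d}) then rewrites $[\alpha^{-1}\beta(u_3),u_1]$ as $-[\alpha^{-1}\beta(u_1),u_3]$, bringing all three brackets into the standard $i<j$ form and producing exactly $-d_T\theta(u_1,u_2,u_3)$. Consequently, the Bihom-Jacobi identity on $\mathfrak{g}$ is equivalent to $d_T\theta=0$, which gives both implications of the theorem.

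The main obstacle will be the sign bookkeeping in the last step: I must ensure that after the successive applications of multiplicativity, $\theta\circ\alpha=\theta$, skew-symmetry of $\theta$, and Eq.~(\ref{d}), the three surviving terms carry precisely the signs $(-1)^{i+j+1}$ that appear in the definition of $d_T\theta$ on a $3$-tuple. Everything else in the argument is a formal unraveling of definitions.
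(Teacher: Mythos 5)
Your proposal is correct and follows essentially the same route as the paper: the commutation, multiplicativity, and skew-symmetry axioms reduce to $\theta\in C^2_{\alpha,\beta}(L)$ being a skew-symmetric Bihom $2$-cochain, and the $\R$-component of the Bihom-Jacobi identity is the cyclic sum $\theta(\alpha\beta(u),[\beta(v),\alpha(w)])+c.p.$, which after applying multiplicativity, $\theta\circ\alpha=\theta$, skew-symmetry, and Eq.~(\ref{d}) is exactly $-d_T\theta(u,v,w)$. The sign bookkeeping you flag as the main risk does work out: the three transformed terms carry precisely the signs $(-1)^{i+j+1}$ of the coboundary formula.
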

\begin{proof}
Obviously, since $\alpha\circ\beta=\beta\circ\alpha,$
we have $\alpha_{\mathfrak{g}}\circ\beta_{\mathfrak{g}}
=\beta_{\mathfrak{g}}\circ\alpha_{\mathfrak{g}}.$
Then we show that $\alpha_{\mathfrak{g}}$
is an algebra morphism with the respect to
the bracket $[\cdot,\cdot]_{\theta}$. On one hand, we have
$$\alpha_{\mathfrak{g}}[(u,s),(v,t)]_{\theta}=
\alpha_{\mathfrak{g}}\big([u,v],\theta(\alpha\beta^{-1}(u),v)\big)
=\big(\alpha([u,v]),\theta(\alpha\beta^{-1}(u),v)\big).$$
On the other hand, we have
$$[\alpha_{\mathfrak{g}}(u,s),\alpha_{\mathfrak{g}}(v,t)]_{\theta}
=\big[\big(\alpha(u),s\big),\big(\alpha(v),t\big)\big]_{\theta}
=\Big([\alpha(u),\alpha(v)], \theta
\big(\alpha\beta^{-1}\alpha(u),\alpha(v)\big)\Big).$$
Since $\alpha$ is an algebra morphism and  $\theta
\big(\alpha\beta^{-1}\alpha(u),\alpha(v)\big)=\theta(\alpha\beta^{-1}(u),v),$
we deduce that $\alpha_{\mathfrak{g}}$ is an algebra morphism.
Similarly, we have $\beta_{\mathfrak{g}}$ is also an algebra morphism.
Furthermore,  we have
\begin{eqnarray*}
[\beta_{\mathfrak{g}}(u,s),\alpha_{\mathfrak{g}}(v,t)]_{\theta}
&=&[\big(\beta(u),s\big),\big(\alpha(v),t\big)]_{\theta}\\
&=&\Big([\beta(u),\alpha(v)],\theta
\big(\alpha\beta^{-1}\beta(u),\alpha(v)\big)\Big)\\
&=&\Big([\beta(u),\alpha(v)],\theta
\big(\alpha(u),\alpha(v)\big)\Big)\\
&=&\Big([\beta(u),\alpha(v)],\theta
(u,v)\Big)
\end{eqnarray*}
and
\begin{eqnarray*}
[\beta_{\mathfrak{g}}(v,t),\alpha_{\mathfrak{g}}(u,s)]_{\theta}
&=&[\big(\beta(v),t\big),\big(\alpha(u),s\big)]_{\theta}\\
&=&\Big([\beta(v),\alpha(u)],\theta
\big(\alpha\beta^{-1}\beta(v),\alpha(u)\big)\Big)\\
&=&\Big([\beta(v),\alpha(u)],\theta
\big(\alpha(v),\alpha(u)\big)\Big)\\
&=&\Big([\beta(v),\alpha(u)],\theta
(v,u)\Big).
\end{eqnarray*}
By (\ref{b}), since $\theta$ is a skew-symmetric $2$-linear map,  we obtain that
$$[\beta_{\mathfrak{g}}(u,s),\alpha_{\mathfrak{g}}(v,t)]_{\theta}
=-[\beta_{\mathfrak{g}}(v,t),\alpha_{\mathfrak{g}}(u,s)]_{\theta}.$$
By direct computations, on the hand,we have
\begin{eqnarray*}
& &\big[\beta_{\mathfrak{g}}^{2}(u,s),[\beta_{\mathfrak{g}}(v,t),
\alpha_{\mathfrak{g}}(w,m)]_{\theta}\big]_{\theta}
+c.p.\big((u,s), (v,t), (w,m)\big)\\
&=&\big[\big(\beta^{2}(u),s\big),[\big(\beta(v),t\big),
\big(\alpha(w),m\big)]_{\theta}\big]_{\theta}+c.p.\big((u,s), (v,t), (w,m)\big)\\
&=&\Big[\big(\beta^{2}(u),s\big),\Big([\beta(v),\alpha(w)],
\theta\big(\alpha(v),\alpha(w)\big)\Big)\Big]_{\theta}
+c.p.\big((u,s), (v,t), (w,m)\big)\\
&=&\big[ \beta ^{2}(u),[\beta( v ),\alpha( w )] \big]+c.p.(u,v,w)+\theta\big(\alpha\beta(u),[\beta(v),\alpha(w)]\big)+c.p.(u,v,w).
\end{eqnarray*}
Thus by the Bihom-Jacobi identity of $L$, $[\cdot,\cdot]_{\theta}$
satisfies the Bihom-Jacobi identity if and only if
$$\theta\big(\alpha\beta(u),[\beta(v),\alpha(w)]\big)
+\theta\big(\alpha\beta(v),[\beta(w),\alpha(u)]\big)
+\theta\big(\alpha\beta(w),[\beta(u),\alpha(v)]\big)=0.$$
Namely,$$ \theta\big(\beta(u),[\alpha^{-1}\beta(v),w]\big)
+\theta\big(\beta(v),[\alpha^{-1}\beta(w),u]\big)
+\theta\big(\beta(w),[\alpha^{-1}\beta(u),v]\big)=0.$$
On the other hand,
\begin{eqnarray*}
& &d_{T}\theta(u,v,w)\\
&=&\theta\big([\alpha^{-1}\beta(u),v],\beta(w)\big)
-\theta\big([\alpha^{-1}\beta(u),w],\beta(v)\big)
+\theta\big([\alpha^{-1}\beta(v),w],\beta(u)\big)\\
&=&-\theta\big(\beta(w),[\alpha^{-1}\beta(u),v]\big)
-\theta\big(\beta(v),[\alpha^{-1}\beta(w),u]\big)
-\theta\big(\beta(u),[\alpha^{-1}\beta(v),w]\big)
\end{eqnarray*}
Therefore, $(\mathfrak{g}, [\cdot,\cdot]_{\theta},
\alpha_{\mathfrak{g}},\beta_{\mathfrak{g}})$
is a multiplicative Bihom-Lie algebra if and only
if $\theta\in C_{\alpha,\beta}^{2}(L)$ is a $2$-cocycle
associated to the trivial representation, i.e.
$d_{T}\theta=0.$
\end{proof}

~~~~~~We call the multiplicative Bihom-Lie algebra
$(\mathfrak{g}, [\cdot,\cdot]_{\theta},
\alpha_{\mathfrak{g}},\beta_{\mathfrak{g}})$ the
\textit{central extension }of $(L, [\cdot,\cdot],
\alpha, \beta)$ by the abelian Bihom-Lie algebra $(\R, 0, 1, 1).$

\begin{prop}
For $\theta_{1},\theta_{2}\in Z^{2}(L),$ if $\theta_{1}
-\theta_{2}$ is a exact, the corresponding two central
extensions $(\mathfrak{g}, [\cdot,\cdot]_{\theta_{1}},
\alpha_{\mathfrak{g}},\beta_{\mathfrak{g}})$
and $(\mathfrak{g}, [\cdot,\cdot]_{\theta_{2}},
\alpha_{\mathfrak{g}},\beta_{\mathfrak{g}})$ are isomorphic.
\end{prop}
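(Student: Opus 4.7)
The plan is to construct an explicit isomorphism of Bihom-Lie algebras $\phi: (\mathfrak{g},[\cdot,\cdot]_{\theta_1},\alpha_{\mathfrak{g}},\beta_{\mathfrak{g}})\to(\mathfrak{g},[\cdot,\cdot]_{\theta_2},\alpha_{\mathfrak{g}},\beta_{\mathfrak{g}})$ built from the $1$-cochain witnessing exactness. By hypothesis, there exists $f\in C^{1}_{\alpha,\beta}(L)$ with $\theta_{1}-\theta_{2}=d_{T}f$, and by definition of $C^{1}_{\alpha,\beta}(L)$ we have $f\circ\alpha=f$ and $f\circ\beta=f$. First I would define
$$\phi(u,s)=\big(u,\,s-f(u)\big),\qquad\forall\,u\in L,\ s\in\R,$$
which is clearly linear with linear inverse $(u,s)\mapsto(u,s+f(u))$, hence bijective.

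Next I would verify that $\phi$ intertwines the structure maps. Using $f\circ\alpha=f$,
$$\phi\circ\alpha_{\mathfrak{g}}(u,s)=\phi\big(\alpha(u),s\big)=\big(\alpha(u),s-f(\alpha(u))\big)=\big(\alpha(u),s-f(u)\big)=\alpha_{\mathfrak{g}}\circ\phi(u,s),$$
and analogously $\phi\circ\beta_{\mathfrak{g}}=\beta_{\mathfrak{g}}\circ\phi$ from $f\circ\beta=f$.

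The key step is the bracket compatibility. I would compute
$$\phi\big([(u,s),(v,t)]_{\theta_{1}}\big)=\Big([u,v],\,\theta_{1}\big(\alpha\beta^{-1}(u),v\big)-f([u,v])\Big),$$
and on the other hand
$$\big[\phi(u,s),\phi(v,t)\big]_{\theta_{2}}=\big[(u,s-f(u)),(v,t-f(v))\big]_{\theta_{2}}=\Big([u,v],\,\theta_{2}\big(\alpha\beta^{-1}(u),v\big)\Big).$$
Equality of these two elements reduces to the scalar identity
$$\theta_{1}\big(\alpha\beta^{-1}(u),v\big)-\theta_{2}\big(\alpha\beta^{-1}(u),v\big)=f([u,v]).$$
This is exactly the relation $\theta_{1}-\theta_{2}=d_{T}f$ evaluated at $(\alpha\beta^{-1}(u),v)$, since from the formula for $d_{T}$ on $C^{1}_{\alpha,\beta}(L)$ we have
$$d_{T}f\big(\alpha\beta^{-1}(u),v\big)=f\big([\alpha^{-1}\beta\cdot\alpha\beta^{-1}(u),v]\big)=f([u,v]).$$

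The only subtle point, and the place where I would double-check sign conventions, is matching the shift by $-f(u)$ against the placement of $\alpha\beta^{-1}$ in the cocycle slot of the bracket $[\cdot,\cdot]_\theta$; once the identity $d_{T}f(\alpha\beta^{-1}(u),v)=f([u,v])$ is established, everything aligns and $\phi$ is an isomorphism of multiplicative Bihom-Lie algebras, completing the proof.
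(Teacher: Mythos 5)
Your proposal is correct and follows essentially the same route as the paper: the same map $\varphi_{\mathfrak{g}}(u,s)=(u,s-f(u))$, the same use of $f\circ\alpha=f$, $f\circ\beta=f$ to intertwine the structure maps, and the same key identity $d_{T}f\big(\alpha\beta^{-1}(u),v\big)=f([u,v])$ to match the cocycle terms in the bracket. Nothing further is needed.
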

\begin{proof}
Assume that $\theta_{1}-\theta_{2}=d_{T}f$,
$f\in C_{\alpha,\beta}^{1}(L).$ Thus we have
\begin{eqnarray*}
& &\theta_{1}\big(\alpha\beta^{-1}(u),v\big)
-\theta_{2}\big(\alpha\beta^{-1}(u),v\big)\\
&=&d_{T}f\big(\alpha\beta^{-1}(u),v\big)\\
&=&f\big([\alpha^{-1}\beta\circ\alpha\beta^{-1}(u),v]\big)\\
&=&f([u,v]).
\end{eqnarray*}
Define $\varphi_{\mathfrak{g}}:\mathfrak{g}\rightarrow\mathfrak{g}$ by
$$\varphi_{\mathfrak{g}}(u,s)=\big(u,s-f(u)\big).$$
Obviously, $\varphi_{\mathfrak{g}}$ is an isomorphism of vector spaces.
The fact that $\varphi_{\mathfrak{g}}$
is a morphism of the Bihom-Lie algebra follows
from the fact $\theta\circ\alpha=\theta,
\theta\circ\beta=\theta.$ More precisely, we have
$$
\varphi_{\mathfrak{g}}\circ\alpha_{\mathfrak{g}}(u,s)
=\varphi_{\mathfrak{g}}\big(\alpha(u),s\big)
=\big(\alpha(u),s-f\circ\alpha(u)\big)=\big(\alpha(u),s-f(u)\big).
$$
On the other hand, we have
$$\alpha_{\mathfrak{g}}\circ\varphi_{\mathfrak{g}}(u,s)
=\alpha_{\mathfrak{g}}\big(u,s-f(u)\big)
=\big(\alpha(u),s-f(u)\big).
$$
Thus, we obtain that $\varphi_{\mathfrak{g}}\circ\alpha_{\mathfrak{g}}
=\alpha_{\mathfrak{g}}\circ\varphi_{\mathfrak{g}}.$
Similarly, we have $\varphi_{\mathfrak{g}}\circ\beta_{\mathfrak{g}}
=\beta_{\mathfrak{g}}\circ\varphi_{\mathfrak{g}}.$
We also have
\begin{eqnarray*}
\varphi_{\mathfrak{g}}[(u,s),(v,t)]_{\theta_{1}}&=&\varphi_{\mathfrak{g}}
\Big([u,v],\theta_{1}\big(\alpha\beta^{-1}(u),v\big)\Big)\\
&=&\Big([u,v],\theta_{1}\big(\alpha\beta^{-1}(u),v\big)-f([u,v])\Big)\\
&=&\Big([u,v],\theta_{2}\big(\alpha\beta^{-1}(u),v\big)\Big)\\
&=&[\varphi_{\mathfrak{g}}(u,s),\varphi_{\mathfrak{g}}(v,t)]_{\theta_{2}}.
\end{eqnarray*}
Therefore, $\varphi_{\mathfrak{g}}$ is also an isomorphism of Bihom-Lie algebras.
\end{proof}

\section{The Adjoint Representations of Bihom-Lie Algebras}

Let $(L, [\cdot,\cdot], \alpha, \beta)$ be a regular Bihom-Lie algebra.
We consider that $L$ represents on itself via the bracket
with respect to the morphisms $\alpha, \beta$. 
\begin{defi}
For any integer $s ,t$, the $\alpha^{s}\beta^{t}$-adjoint
representation of the regular Bihom-Lie algebra
$(L, [\cdot,\cdot], \alpha, \beta)$, which we denote by $ad_{s,t}$, is defined by
$$ad_{s,t}(u)(v)=[\alpha^{s}\beta^{t}(u),v], ~~\forall u,v\in L.$$
\end{defi}
\begin{lemm}
With the above notations, we have
$$ad_{s,t}\big(\alpha(u)\big)\circ\alpha=\alpha\circ ad_{s,t}(u);$$
$$ad_{s,t}\big(\beta(u)\big)\circ\beta=\beta\circ ad_{s,t}(u);$$
$$ad_{s,t}\big([\beta(u),v]\big)\circ\beta=ad_{s,t}\big(\alpha\beta(u)\big)\circ ad_{s,t}(v)-ad_{s,t}\big(\beta(v)\big)\circ ad_{s,t}\big(\alpha(u)\big).$$
Thus the definition of $\alpha^{s}\beta^{t}$-adjoint representation is well defined.
\end{lemm}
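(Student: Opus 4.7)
The plan is to evaluate each of the three identities on an arbitrary test vector in $L$ and reduce the claim to identities inside $L$ itself, exploiting that since $(L,[\cdot,\cdot],\alpha,\beta)$ is regular, both $\alpha$ and $\beta$ are bijective commuting Bihom-Lie morphisms.

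For the first two compatibility conditions, I would apply them to $v\in L$: the first side of the first equation is
$$ad_{s,t}(\alpha(u))(\alpha(v))=[\alpha^{s+1}\beta^{t}(u),\alpha(v)]=\alpha([\alpha^{s}\beta^{t}(u),v]),$$
using that $\alpha$ commutes with $\beta$ and is a bracket morphism; this equals $\alpha\circ ad_{s,t}(u)(v)$. The $\beta$ version is identical with the roles of $\alpha$ and $\beta$ swapped. These two items require only multiplicativity.

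The third identity is the substantive one. After evaluating at $w\in L$ and distributing $\alpha^{s}\beta^{t}$ across the inner bracket, it becomes
$$[[\alpha^{s}\beta^{t+1}(u),\alpha^{s}\beta^{t}(v)],\beta(w)]=[\alpha^{s+1}\beta^{t+1}(u),[\alpha^{s}\beta^{t}(v),w]]-[\alpha^{s}\beta^{t+1}(v),[\alpha^{s+1}\beta^{t}(u),w]].$$
My strategy is first to use the skew-symmetry rewriting (\ref{d}) to move $w$ to an outer slot on the left-hand side, turning it into $-[\alpha^{-1}\beta^{2}(w),[\alpha^{s+1}\beta^{t}(u),\alpha^{s+1}\beta^{t-1}(v)]]$, and then to apply the Bihom-Jacobi identity (\ref{c}) to the triple $(a,a',a'')=(\alpha^{s+1}\beta^{t-1}(u),\alpha^{s}\beta^{t-1}(v),\alpha^{-1}(w))$. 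With this choice the substitutions $\beta^{2}(a)$, $\beta(a')$, $\alpha(a'')$ immediately match the outer factor and the inner bracket of the first term on the right-hand side; after one more use of (\ref{d}) to flip $[\alpha^{-1}\beta(w),\alpha^{s+2}\beta^{t-1}(u)]$ into $-[\alpha^{s+1}\beta^{t}(u),w]$, the second cyclic term of (\ref{c}) becomes the second term on the right-hand side, and the third cyclic term matches the rewritten left-hand side exactly.

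The main obstacle is purely combinatorial: selecting the substitution in (\ref{c}) so that the doubly-twisted exponents produced by $ad_{s,t}$ line up, and placing the skew-symmetry applications correctly so that $w$ (not $\alpha^{-1}\beta(w)$ or similar) ends up in the right slots. Once the three identities hold, they are precisely the three axioms of Definition 4.1 applied to the quadruple $(L,ad_{s,t},\alpha,\beta)$, so $ad_{s,t}$ is a bona fide representation of $L$ on itself and the definition is well-posed.
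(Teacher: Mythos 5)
Your proposal is correct and follows essentially the same route as the paper: both reduce the third identity, via the skew-symmetry rewriting $[x,y]=-[\alpha^{-1}\beta(y),\alpha\beta^{-1}(x)]$, to the Bihom-Jacobi identity applied to the triple $\big(\alpha^{s+1}\beta^{t-1}(u),\,\alpha^{s}\beta^{t-1}(v),\,\alpha^{-1}(w)\big)$, with the first two compatibility conditions following from multiplicativity. The only cosmetic point is that the third cyclic term of the Jacobi identity equals the \emph{negative} of your rewritten left-hand side, which is exactly what you need after moving it across the equality, so the sign bookkeeping closes as you intend.
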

\begin{proof}
For any $u,v,w\in L,$ first we show that $ad_{s,t}\big(\alpha(u)\big)\circ\alpha=\alpha\circ ad_{s,t}(u).$
\begin{eqnarray*}
ad_{s,t}\big(\alpha(u)\big)\circ\alpha(v)&=&[\alpha^{s}\beta^{t}\circ\alpha(u),\alpha(v)]\\
&=&\alpha\big([\alpha^{s}\beta^{t}(u),v]\big)=\alpha\circ ad_{s,t}(u)(v).
\end{eqnarray*}
Similarly, we have $$ad_{s,t}\big(\beta(u)\big)\circ\beta(v)=\beta\circ ad_{s,t}(u)(v).$$
Note that the skew-symmetry condition implies $$ad_{s,t}(u)(v)
=[\alpha^{s}\beta^{t}(u),v]=-[\alpha^{-1}\beta(v),
\alpha^{s+1}\beta^{t-1}(u)], ~~\forall u,v\in L.$$
On one hand,  we have
\begin{eqnarray*}
\big(ad_{s,t}([\beta(u),v])\circ\beta\big)(w)&=&ad_{s,t}([\beta(u),v])\big(\beta(w)\big)\\
&=&-\big[ \alpha^{-1}\beta^{2}(w),\alpha^{s+1}\beta^{t-1}[\beta(u),v]\big]\\
&=&-\big[ \alpha^{-1}\beta^{2}(w),[\alpha^{s+1}\beta^{t}(u),\alpha^{s+1}\beta^{t-1}(v)]\big].
\end{eqnarray*}
On the other hand, we have
\begin{eqnarray*}
& &\big(ad_{s,t} (\alpha\beta(u))\circ ad_{s,t}(v)\big)
(w)-\big(ad_{s,t} (\beta(v))\circ ad_{s,t}(\alpha(u))\big)(w)\\
&=&ad_{s,t} \big(\alpha\beta(u)\big)\big(-[\alpha^{-1}\beta(w),\alpha^{s+1}\beta^{t-1}(v)]\big)\\
& &-ad_{s,t} \big(\beta(v)\big)\big(-[\alpha^{-1}\beta(w),\alpha^{s+2}\beta^{t-1}(u)]\big)\\
&=&\big[\alpha^{-1}\beta[\alpha^{-1}\beta(w), \alpha^{s+1}\beta^{t-1}(v)],\quad\alpha^{s+1}\beta^{t-1}\alpha\beta(u)\big]\\
& &-\big[\alpha^{-1}\beta[\alpha^{-1}\beta(w), \alpha^{s+2}
\beta^{t-1}(u)],\quad\alpha^{s+1}\beta^{t-1}\beta(v)\big]\\
&=&\big[\beta[\alpha^{-2}\beta(w), \alpha^{s}\beta^{t-1}(v)],\quad\alpha^{s+2}\beta^{t}(u)\big]\\
& &-\big[\beta[\alpha^{-2}\beta(w), \alpha^{s+1}\beta^{t-1}(u)],\quad\alpha^{s+1}\beta^{t}(v)\big]\\
&\overset{\text{skew-symmetry}}{=}&-\big[\beta\alpha^{s+1}\beta^{t}(u),\quad\alpha[\alpha^{-2}\beta(w), \alpha^{s}\beta^{t-1}(v)]\big]\\
& &+\big[\beta\alpha^{s}\beta^{t}(v),\quad\alpha[\alpha^{-2}\beta(w), \alpha^{s+1}\beta^{t-1}(u)]\big]\\
&=&-\big[\alpha^{s+1}\beta^{t+1}(u),\quad[\alpha^{-1}\beta(w), \alpha^{s+1}\beta^{t-1}(v)]\big]\\
& &+\big[\alpha^{s}\beta^{t+1}(v),\quad[\alpha^{-1}\beta(w), \alpha^{s+2}\beta^{t-1}(u)]\big]\\
&=&\big[\alpha^{s+1}\beta^{t+1}(u),\quad[\alpha^{s}\beta^{t}(v),  w]\big]\\
& &+\big[\alpha^{s}\beta^{t+1}(v),\quad[\alpha^{-1}\beta(w), \alpha^{s+2}\beta^{t-1}(u)]\big]\\
&=&\big[\beta^{2}\big(\alpha^{s+1}\beta^{t-1}(u)\big),\quad[\beta\big(\alpha^{s}\beta^{t-1}(v)\big), \alpha\big(\alpha^{-1}(w)\big)]\big]\\
& &+\big[\beta^{2}\big(\alpha^{s}\beta^{t-1}(v)\big),\quad[\beta\big(\alpha^{-1}(w)\big),  \alpha\big(\alpha^{s+1}\beta^{t-1}(u)\big)]\big]\\
&=&-\big[\beta^{2}\big(\alpha^{-1}(w)\big),\quad[\beta\big(\alpha^{s+1}\beta^{t-1}(u)\big), \alpha\big(\alpha^{s}\beta^{t-1}(v)\big)]\big]\\
&=&-\big[ \alpha^{-1}\beta^{2}(w),\quad[\alpha^{s+1}\beta^{t}(u), \alpha^{s+1}\beta^{t-1}(v)]\big].
\end{eqnarray*}
Thus, the definition of $\alpha^{s}\beta^{t}$-adjoint
representation is well defined. The proof is completed.
\end{proof}

~~~~~~The set of $k$-Bihom-cochains on $L$ with coefficients
in $L$, which we denote by $C_{\alpha,\beta}^{k}(L; L)$, are given by
$$C_{\alpha,\beta}^{k}(L; L)=\{f\in C^{k}(L; L)
\mid f\circ\alpha=\alpha\circ f, f\circ\beta=\beta\circ f\}.$$
The set of 0-Bihom-cochains are given by:
$$C_{\alpha,\beta}^{0}(L; L)=\{u\in L\mid\alpha(u)=u, \beta(u)=u\}.$$

~~~~~~Associated to the $\alpha^{s}\beta^{t}$-adjoint representation, the coboundary operator $d_{s,t}: C_{\alpha,\beta}^{k}(L; L)\rightarrow C_{\alpha,\beta}^{k+1}(L; L)$ is given by
\begin{eqnarray*}
& &d_{s,t}f(u_{1},\cdots,u_{k+1})\\
&=&\sum_{i=1}^{k+1}(-1)^{i}[\alpha^{s+1}\beta^{t+k-1}(u_{i}),
f(u_{1},\cdots,\widehat{u_{i}},\cdots,u_{k+1})]\\
&   &+\sum_{i<j}(-1)^{i+j+1}f\big([\alpha^{-1}\beta(u_{i}), u_{j}], \beta(u_{1}),\cdots,\widehat{u_{i}},\cdots,\widehat{u_{j}},\cdots,\beta(u_{k+1})\big).
\end{eqnarray*}

 ~~~~~~For the $\alpha^{s}\beta^{t}$-adjoint representation $ad_{s,t}$,
 we obtain the $\alpha^{s}\beta^{t}$-adjoint complex
 $(C_{\alpha,\beta}^{k}(L;L), d_{s,t})$ and the corresponding cohomology $$H^{k}(L;ad_{s,t})=Z^{k}(L;ad_{s,t})/B^{k}(L;ad_{s,t}).$$

~~~~~~We have known that a $1$-cocycle associated to the
adjoint representation is a derivation for Lie algebras and Hom-Lie algebras. Similarly, we have
\begin{prop}\label{6.3}
Associated to the  $\alpha^{s}\beta^{t}$-adjoint
representation $ad_{s,t}$ of the regular Bihom-Lie
algebra $(L, [\cdot,\cdot], \alpha, \beta)$,
$D\in C_{\alpha,\beta}^{1}(L;L)$ is a $1$-cocycle
if and only if $D$ is an $\alpha^{s+2}\beta^{t-1}$-derivation,
i.e. $D\in Der_{\alpha^{s+2}\beta^{t-1}}(L).$
\end{prop}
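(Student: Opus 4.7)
The plan is to write out both conditions explicitly and show each is a trivial algebraic rewriting of the other, with the bijectivity of $\alpha,\beta$ and the skew-symmetry identity Eq.(\ref{d}) as the bridge.

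First I would specialize the coboundary formula for $d_{s,t}$ to $k=1$. For $D\in C_{\alpha,\beta}^{1}(L;L)$ and $u,v\in L$, only $i=1,j=2$ survives the second sum, yielding
\begin{equation*}
d_{s,t}D(u,v)= -[\alpha^{s+1}\beta^{t}(u),D(v)] + [\alpha^{s+1}\beta^{t}(v),D(u)] + D\big([\alpha^{-1}\beta(u),v]\big).
\end{equation*}
Thus the $1$-cocycle condition $d_{s,t}D=0$ reads
\begin{equation*}
D\big([\alpha^{-1}\beta(u),v]\big) = [\alpha^{s+1}\beta^{t}(u),D(v)] - [\alpha^{s+1}\beta^{t}(v),D(u)], \qquad \forall u,v\in L. \quad (\ast)
\end{equation*}

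Next I would replace $u$ by $\alpha\beta^{-1}(u)$ in $(\ast)$, which is allowed since $\alpha,\beta$ are bijective. Using $\alpha^{-1}\beta\circ\alpha\beta^{-1}=\mathrm{id}$ together with $D\circ\alpha=\alpha\circ D$ and $D\circ\beta=\beta\circ D$ (which hold because $D\in C^{1}_{\alpha,\beta}(L;L)$), the identity becomes
\begin{equation*}
D([u,v]) = [\alpha^{s+2}\beta^{t-1}(u),D(v)] - [\alpha^{s+1}\beta^{t}(v),\alpha\beta^{-1}D(u)].
\end{equation*}
The key step is to recognize that by the skew-symmetry identity Eq.(\ref{d}), namely $[a,b]=-[\alpha^{-1}\beta(b),\alpha\beta^{-1}(a)]$, we have
\begin{equation*}
-[\alpha^{s+1}\beta^{t}(v),\alpha\beta^{-1}D(u)] = [\alpha^{-1}\beta(\alpha\beta^{-1}D(u)),\alpha\beta^{-1}(\alpha^{s+1}\beta^{t}(v))] = [D(u),\alpha^{s+2}\beta^{t-1}(v)].
\end{equation*}
Substituting this back gives exactly
\begin{equation*}
D([u,v]) = [D(u),\alpha^{s+2}\beta^{t-1}(v)] + [\alpha^{s+2}\beta^{t-1}(u),D(v)],
\end{equation*}
which, together with the compatibility $D\circ\alpha=\alpha\circ D$, $D\circ\beta=\beta\circ D$, is precisely the condition $D\in Der_{\alpha^{s+2}\beta^{t-1}}(L)$.

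For the converse I would simply reverse the substitution (replacing $u$ by $\alpha^{-1}\beta(u)$) and apply Eq.(\ref{d}) in the opposite direction to recover $(\ast)$. I do not expect any real obstacle: the proof is a careful bookkeeping exercise in the exponents of $\alpha$ and $\beta$. The only subtle point is making sure that the single skew-symmetry rewrite is performed correctly so that the exponents on the $\alpha^{s+2}\beta^{t-1}$ factor land where Definition~3.1 demands; once the substitution $u\mapsto\alpha\beta^{-1}(u)$ is made the identification is forced.
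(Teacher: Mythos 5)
Your proof is correct and follows essentially the same route as the paper: both reduce the $1$-cocycle condition $d_{s,t}D=0$ to the explicit identity $D([\alpha^{-1}\beta(u),v])=[\alpha^{s+1}\beta^{t}(u),D(v)]-[\alpha^{s+1}\beta^{t}(v),D(u)]$ and match it to the $\alpha^{s+2}\beta^{t-1}$-derivation identity using the bijectivity of $\alpha,\beta$, the commutation of $D$ with $\alpha,\beta$, and the skew-symmetry relation Eq.~(\ref{d}). The only cosmetic difference is the direction of the substitution (you replace $u$ by $\alpha\beta^{-1}(u)$ in the cocycle identity, while the paper evaluates the derivation identity at $[\alpha^{-1}\beta(u),v]$), which is the same maneuver.
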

\begin{proof}
The conclusion follows directly from the definition of
the coboundary operator $d_{s,t}$. $D$ is closed if and only if
$$d_{s,t}(D)(u,v)=-[\alpha^{s+1}\beta^{t}(u),D(v)]
+[\alpha^{s+1}\beta^{t}(v),D(u)]+D([\alpha^{-1}\beta(u),v])=0.$$
$D$ is an $\alpha^{s+2}\beta^{t-1}$-derivation if and only if
\begin{eqnarray*}
D([\alpha^{-1}\beta(u),v])&=&[\alpha^{s+2}\beta^{t-1}
\alpha^{-1}\beta(u),D(v)]+[\alpha^{-1}\beta \circ D(u),\alpha^{s+2}\beta^{t-1}(v)]\\
&=&[\alpha^{s+1}\beta^{t}(u),D(v)]-[\alpha^{s+1}\beta^{t}(v),D(u)].
\end{eqnarray*}
So $D\in C_{\alpha,\beta}^{1}(L;L)$ is a $1$-cocycle
if and only if $D$ is an $\alpha^{s+2}\beta^{t-1}$-derivation.
\end{proof}
\begin{prop}
For any integer $s, t$, Associated to the
$\alpha^{s}\beta^{t}$-adjoint representation $ad_{s,t}$, we have
$$H^{0}(L;ad_{s,t})=\{u\in L\mid\alpha(u)=u, \beta(u)=u, [u,v]=0, ~~\forall v\in L\};$$
$$H^{1}(L;ad_{s,t})=Der_{\alpha^{s+2}\beta^{t-1}}(L)/ Inn_{\alpha^{s+2}\beta^{t-1}}(L).$$
\end{prop}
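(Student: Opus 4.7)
The plan is to handle $H^0$ and $H^1$ separately by specializing the coboundary formula for $d_{s,t}$ to low-degree cochains and then matching the resulting cocycle and coboundary spaces directly against the two right-hand sides.

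For $H^0$, I would begin from the definition $C_{\alpha,\beta}^{0}(L;L) = \{u \in L \mid \alpha(u)=u,\ \beta(u)=u\}$. Setting $k=0$ in the formula for $d_{s,t}$, the sum indexed by $i<j$ is empty and the first sum collapses to the single term $i=1$, giving $d_{s,t}(u)(v) = -[\alpha^{s+1}\beta^{t-1}(v),u]$. Since $\alpha^{s+1}\beta^{t-1}$ is a bijection by regularity, closure is equivalent to $[w,u]=0$ for every $w \in L$. Invoking the skew-symmetry variant (\ref{d}) together with $\alpha(u)=\beta(u)=u$ (which forces $\alpha\beta^{-1}(u)=u$), this is in turn equivalent to $[u,v]=0$ for all $v \in L$. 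Because there are no $(-1)$-cochains, $B^{0}=0$ and $H^{0} = Z^{0}$ is exactly the asserted set.

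For $H^1$, the cocycle side has already been identified in Proposition~\ref{6.3}: $Z^{1}(L;ad_{s,t}) = Der_{\alpha^{s+2}\beta^{t-1}}(L)$. For the coboundary side, I would apply the same degree-zero specialization to an arbitrary $u \in C_{\alpha,\beta}^{0}(L;L)$ to obtain $d_{s,t}(u)(v) = -[\alpha^{s+1}\beta^{t-1}(v),u]$, and then compare this formula with the explicit description of inner derivations from Section~3, namely $Inn_{\alpha^{k}\beta^{l}}(L) = \{-[\alpha^{k-1}\beta^{l}(\cdot),u] \mid \alpha(u)=u,\ \beta(u)=u\}$. With the choice $(k,l) = (s+2,t-1)$ these two descriptions coincide as subsets of $\mathrm{End}(L)$, so $B^{1}(L;ad_{s,t}) = Inn_{\alpha^{s+2}\beta^{t-1}}(L)$, and the quotient formula for $H^1$ follows at once.

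No genuine obstacle arises: the substantive input — that a $1$-cocycle for $ad_{s,t}$ is a twisted derivation — is Proposition~\ref{6.3}, and the fact that maps of the form $-[\alpha^{k-1}\beta^{l}(\cdot),u]$ are $\alpha^{k}\beta^{l}$-derivations was verified in Section~3. The only step demanding real care is the exponent book-keeping: the shift $+1$ in the $\alpha$-exponent and $-1$ in the $\beta$-exponent between the indexing conventions of $d_{s,t}$, $Der_{\alpha^{k}\beta^{l}}$ and $Inn_{\alpha^{k}\beta^{l}}$ must be tracked so that the derivation degree coming from the cocycle calculation matches the inner-derivation degree coming from the coboundary calculation.
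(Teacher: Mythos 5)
Your proposal is correct and follows essentially the same route as the paper: specialize $d_{s,t}$ to degree zero to get $d_{s,t}(u)(v)=-[\alpha^{s+1}\beta^{t-1}(v),u]$, use bijectivity of $\alpha,\beta$ and skew-symmetry to identify $Z^0$ with the centralizer condition $[u,v]=0$, note $B^0=0$, and for $H^1$ combine Proposition~\ref{6.3} with the observation that the degree-zero coboundaries are exactly the maps $-[\alpha^{s+1}\beta^{t-1}(\cdot),u]$ defining $Inn_{\alpha^{s+2}\beta^{t-1}}(L)$. The only cosmetic difference is that the paper rewrites the closedness condition via $[\alpha^{-1}\beta(u),\alpha^{s+2}\beta^{t-2}(v)]$ and applies $\alpha^{-s-2}\beta^{-t+2}$, whereas you argue directly from bijectivity of $\alpha^{s+1}\beta^{t-1}$; both are equivalent.
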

\begin{proof}
For any 0-Bihom-cochain $u\in C_{\alpha,\beta}^{0}(L;L)$ , we have
$$d_{s,t}u(v)=-[\alpha^{s+1}\beta^{t-1}(v),u]=[\alpha^{-1}
\beta(u),\alpha^{s+2}\beta^{t-2}(v)],~~\forall v\in L.$$
Therefore, $u$ is a closed $0$-Bihom-cochain if and only if
$$[\alpha^{-1}\beta(u),\alpha^{s+2}\beta^{t-2}(v)]=0,$$
which is equivalent to
$$\alpha^{-s-2}\beta^{-t+2}([\alpha^{-1}\beta(u),\alpha^{s+2}\beta^{t-2}(v)])=[u,v]=0.$$
Therefore, the set of closed $0$-Bihom-cochain $Z^{0}(L;ad_{s,t})$ is given by
$$Z^{0}(L;ad_{s,t})=\{u\in C_{\alpha,\beta}^{0}(L;L)\mid [u,v]=0, ~~\forall v\in L\}.$$
Obviously,
$$B^{0}(L;ad_{s,t})=0.$$
Thus we have $$H^{0}(L;ad_{s,t})=\{u\in L\mid\alpha(u)=u,
\beta(u)=u, [u,v]=0, ~~\forall v\in L\}.$$
By Proposition \ref{6.3}, we have $Z^{1}(L;ad_{s,t})
=Der_{\alpha^{s+2}\beta^{t-1}}(L).$ Furthermore,
it is obvious that any exact $1$-Bihom-cochain is of
the form $-[\alpha^{s+1}\beta^{t-1}(\cdot), u]$
for some $u\in C_{\alpha,\beta}^{0}(L;L).$
Therefore, we have $B^{1}(L;ad_{s,t})
= Inn_{\alpha^{s+2}\beta^{t-1}}(L),$
which implies that $H^{1}(L;ad_{s,t})
=Der_{\alpha^{s+2}\beta^{t-1}}(L)/
Inn_{\alpha^{s+2}\beta^{t-1}}(L).$
\end{proof}
\vs{5pt}

\noindent{\bbb{References}}
\begin{enumerate}

\bibitem{AEM}
Ammar F., Ejbehi Z., Makhlouf A., Cohomology and deformations of Hom-algebras,
{\textit{J. Lie Theory }} \textbf{21} (2011), 813-836.




\bibitem{CE}
Chevalley  C., Eilenberg  S., Cohomology theory of
Lie groups and Lie algebras, {\textit{Trans. Amer. Math. Soc.}} \textbf{63} (1948), 85-124.

\bibitem{CS1} Cheng Y., Su Y., (Co)Homology and universal
central extension of Hom-Leibniz algebras,
{\it Acta Math. Sinica} (Ser. B), 27(5) (2011), 813-830.

\bibitem{CS2} Cheng Y., Su Y., Quantum deformations of
Heisenberg-Virasoro algebra,
{\it Algebra Colloq.}, 20(2) (2013), 299-308.

\bibitem{CY} Cheng Y., Yang H., Low-dimensional
cohomology of the $q$-deformed Heisenberg-Virasoro
algebra of Hom-type, {\it Front. Math. China}, {\bf 5}(4) (2010), 607-622.


\bibitem{GMMP}
Graziani G., Makhlouf A., Menini C., Panaite F., Bihom-associative algebras, Bihom-Lie algebras and Bihom-bialgebras, {\textit{Symmetry, Integrability and geometry.}}
SIGMA\textbf{ 11} (2015), 086, 34 pages.

\bibitem{HLS}
Hartwig J.T., Larsson D., Silvestrov S.D., Deformations of Lie algebras using
$\sigma$-derivations, {\textit{J. Algebra}} \textbf{295}
(2006), 314-361.

\bibitem{HSS}
Hassanzadeh M., Shapiro I., S\"{u}tl\"{u} S., Cyclic homology for Hom-associative algebras, {\textit{J. Geom. Phys.}} \textbf{98}
(2015), 40-56.

\bibitem{LCM}
Liu Y., Chen L., Ma Y., Representations and module-extensions of $3$-hom-Lie algebras,
{\textit{J. Geom. Phys.}} \textbf{98 } (2015), 376-383.

\bibitem{MS1}
Makhlouf A., Silvestrov S., Hom-algebras and Hom-coalgebras, {\textit{J. Algebra Appl.}}
\textbf{9} (2010), 553-589.

\bibitem{MS2}
Makhlouf A., Silvestrov S., Notes on formal deformations of hom-associative and hom-Lie algebras,
{\textit{Forum Math.}} \textbf{22 } (4)(2010), 715¨C739.

\bibitem{S}
Sheng Y., Representations of Hom-Lie algebras,
{\textit{Algebr. Represent. Theory}} \textbf{15} (2012), 1081-1098.

\bibitem{SX} Sheng, Y., Xiong, Z., On Hom-Lie algebras, \textit{Linear Multilinear Algebra}
\textbf{63} (12) (2015), 2379-2395.



\bibitem{Y1}
Yau D.,  Hom-Yang-Baxter equation, Hom-Lie algebras, and quasi-triangular bialgebras,  {\textit{J.
Phys. A: Math. Theor.}} \textbf{42} (2009), 165202.

\bibitem{Y2}
Yau D., Hom-algebras and homology, {\textit{J. Lie Theory}} \textbf{19} (2009), 409-421.

\end{enumerate}

\end{document}